\documentclass[11pt,a4paper]{article}
\usepackage[utf8]{inputenc}
\usepackage{amsmath,amssymb,amsfonts,amsthm}
\usepackage{hyperref}
\usepackage{geometry}
\numberwithin{equation}{section}
\usepackage{mathtools}
\usepackage{mathrsfs}
\usepackage{xcolor}

\theoremstyle{plain}
\newtheorem{thm}{Theorem}[section]
\newtheorem{prop}[thm]{Proposition}
\newtheorem{lem}[thm]{Lemma}
\newtheorem{cor}[thm]{Corollary}
\theoremstyle{definition}
\newtheorem{defn}[thm]{Definition}
\theoremstyle{remark}
\newtheorem{rmk}[thm]{Remark}
\newtheorem{ex}[thm]{Example}

\title{\textbf{Local Bifurcations from Single-Mode Traveling Waves
in the Filamentation Equation}}
\author{Yuri Cacchi\`o\\
\scriptsize{Faculty of Mathematics, University of Vienna, Oskar-Morgenstern-Platz 1, 1090 Vienna, Austria.}\\ \href{mailto:yuri.cacchio@univie.ac.at}{\scriptsize{yuri.cacchio@univie.ac.at}}}

\begin{document}

\maketitle

\begin{abstract}
We study local bifurcations from the single-mode traveling wave family
of the filamentation equation on the torus, within the real
positive-frequency Sobolev space $X^s$. A Fourier null structure
removes the apparent derivative loss and makes the traveling wave map
real analytic for $s>3/2$. Its linearization splits into finitely many
coupled two-mode blocks, a carrier block, and a diagonal
high-frequency tail. For negative temporal frequency, we determine the
critical values generated by both the finite coupled blocks and the
tail. At every non resonant finite-block critical value,
Lyapunov-Schmidt reduction yields a locally unique real-analytic
branch, with vanishing linear correction to the bifurcation parameter.
Moreover, each finite tail critical value produces a simple local
branch. These values accumulate at a parameter where the
linearization ceases to be Fredholm. The accumulation point is
nevertheless a bifurcation point in the usual topological sense. For
$\sigma=1$, a symmetry in the first Fourier mode produces an
exact two-mode vertical branch, including the exceptional
non-Fredholm case $k=2$.

\vspace{0.5cm}
\textbf{Keywords:} filamentation equation; vorticity interfaces;
traveling waves; local bifurcation; Lyapunov-Schmidt reduction.

\textbf{MSC (2020):} 35B32; 76B47; 35C07; 35Q35; 47J15.
\end{abstract}

\tableofcontents

\section{Introduction}
 
\subsection{Filamentation on two-dimensional vorticity interfaces}
\label{sub:filamentation}
 
For the two-dimensional incompressible Euler equations, vorticity is
materially conserved. Therefore, an initially piecewise-constant
vorticity distribution remains piecewise constant, and its evolution
is entirely determined by the motion of the interfaces, or contours,
separating regions of constant vorticity. This is the formulation known as \emph{contour dynamics}
\cite{Dritschel1989, ZabuskyHughesRoberts1979}, and it takes the same form on the
sphere as in the plane \cite{Dritschel1988sphere}. For a single interface
$\mathcal{C}$ across which the vorticity jumps by a constant amount $\chi$,
each point $\mathbf{x}\in\mathcal{C}$ moves according to
\begin{equation}
  \frac{d\mathbf{x}}{dt}
  \;=\;-\,\frac{\chi}{4\pi}\oint_{\mathcal{C}}
  \log|\mathbf{x}'-\mathbf{x}|^{2}\,d\mathbf{x}' .
  \label{eq:contour-dynamics}
\end{equation}
Interfaces of this kind retain their regularity for all time
\cite{BertozziConstantin1993, Chemin1993}, and yet they are observed to lengthen
and to fold without bound.
 
The mechanism responsible for this growth in complexity was identified by
Dritschel \cite{Dritschel1988filamentation}. A small disturbance riding on a
circular or on a straight equilibrium interface steepens, overturns and sheds a
thin filament. Once the first filament has formed, the process repeats, and the
interface never returns to a simple shape. For the history of the problem,
which goes back to Kelvin \cite{Thomson1880}, we refer to Craik
\cite{Craik2012}.
 
The onset of filamentation, that is the wave-steepening stage which precedes the
overturning, admits an asymptotic description. In the linearized theory, every perturbation of an equilibrium interface oscillates with the same frequency $\chi/2$, independently of its spatial structure. The linearized dynamics is
therefore purely oscillatory and carries no dispersion, so that no steepening
can take place at that order, and the first correction appears at cubic
order. Writing the interface displacement in the form
\begin{equation*}
  \eta(\theta,t)\;=\;A(\theta,t)\,e^{\,i\chi t/2}+\mathrm{c.c.},
\end{equation*}
with $A$ a slowly varying amplitude, expanding \eqref{eq:contour-dynamics} to
third order in the wave amplitude and removing the secular terms, Dritschel
\cite{Dritschel1988filamentation} obtained a closed cubic evolution equation for
$A$. Dritschel, Constantin and Germain \cite{DritschelConstantinGermain2025}
observed that, in a suitably rescaled slow time, this amplitude equation carries
no free parameter. The same equation governs a circular vortex patch on
the sphere, at any latitude, and a circular patch in the plane, while a straight
periodic interface in the plane obeys the same equation with one cubic term
removed. The two geometries are thus collected in a single equation depending on
a parameter $\sigma\in\{0,1\}$.
 
\subsection{Filamentation equation}
\label{sub:the-equation}
 
Let $\mathbb{T}=\mathbb{R}/(2\pi\mathbb{Z})$. For
$u(x)=\sum_{m\in\mathbb{Z}}\widehat{u}_{m}e^{imx}$ we denote by $\mathbb P_{+}$ the
projector onto the strictly positive frequencies,
$\mathbb P_{+}e^{imx}=\mathbf{1}_{[1,\infty)}(m)\,e^{imx}$, and we set
\begin{equation*}
  \widehat{\Lambda u}_{m}=|m|\,\widehat{u}_{m},
  \qquad
  \widehat{\Lambda^{-1}u}_{m}=
  \begin{cases}
    |m|^{-1}\widehat{u}_{m}, & m\neq 0,\\[2pt]
    0, & m=0,
  \end{cases}
\end{equation*}
so that $\Lambda=|\partial_{x}|$.
 
Constantin, Dritschel and Germain \cite{ConstantinDritschelGermain2025} noticed
that the amplitude equation described above can be written, for complex valued
functions $u=u(t,x)$ with positive spectrum $\mathbb P_{+}u=u$, in the compact form
\begin{equation}
  \partial_{t}u \;=\; \partial_{x}\,\mathcal{C}_{\sigma}[u],
  \label{eq:filamentation}
\end{equation}
where the cubic term is
\begin{equation}
  \mathcal{C}_{\sigma}[u](x)
  \;=\;\mathbb P_{+}\left[\,
  \frac{1}{4\pi}\int_{0}^{2\pi}
  \frac{|u(x)-u(y)|^{2}\bigl(u(x)-u(y)\bigr)}{1-\cos(x-y)}\,dy
  \;-\;\sigma\,|u(x)|^{2}u(x)\right].
  \label{eq:cubic}
\end{equation}
Equation \eqref{eq:filamentation} is the \emph{filamentation equation}. The
value $\sigma=0$ corresponds to a periodic vorticity interface in the
plane, and $\sigma=1$ to a zonal vorticity interface on the sphere.
 
Two equivalent expressions for the cubic term are used throughout. The first is
local in $\Lambda$,
\begin{equation}
  \mathcal{C}_{\sigma}[u]
  \;=\;\mathbb P_{+}\Bigl[\,|u|^{2}\Lambda u-u\,\Lambda|u|^{2}-\sigma|u|^{2}u\Bigr],
  \label{eq:cubic-Lambda}
\end{equation}
and the second is spectral: writing $u=\sum_{m\geq 1}a_{m}e^{imx}$,
\begin{equation}
  \mathcal{C}_{\sigma}[u]
  \;=\;\sum_{\substack{k,\ell,m,p\,\geq\,1\\ p=k+\ell-m}}
  \bigl(\min(k,\ell,m,p)-\sigma\bigr)\,a_{k}a_{\ell}\overline{a_{m}}\;e^{ipx}.
  \label{eq:cubic-Fourier}
\end{equation}
The symmetrized symbol in \eqref{eq:cubic-Fourier} comes from the identity
\begin{equation*}
  \tfrac{1}{2}\bigl(k+\ell-|k-p|-|\ell-p|\bigr)=\min(k,\ell,m,p),
  \qquad k+\ell=m+p,\quad k,\ell,m,p\geq 1,
\end{equation*}
already noted by Biello and Hunter \cite{BielloHunter2010}. This identity is
the analytic heart of the problem. Each of the two terms $|u|^{2}\Lambda u$ and
$u\Lambda|u|^{2}$ in \eqref{eq:cubic-Lambda} loses one derivative, but their
difference does not (see Lemma \ref{lem:null} below). Indeed, in \eqref{eq:cubic-Fourier}, the derivative is carried by the \emph{lowest} of the four frequencies involved.
 
\subsection{Known results}
\label{sub:known-results}
The filamentation equation is Hamiltonian. Its energy is
\begin{equation}\label{eq:energy}
  E_{\sigma}(u)=\frac{1}{4\pi^{2}}\iint_{0}^{2\pi}
  \frac{|u(x)-u(y)|^{4}}{1-\cos(x-y)}\,dx\,dy
  -\frac{2\sigma}{\pi}\int_{0}^{2\pi}|u(x)|^{4}\,dx.
\end{equation}
For the symplectic form
\begin{equation}\label{eq:symp_form}
      \omega(u,v)
  :=
  \operatorname{Re}\int_0^{2\pi}
  (\partial_x^{-1}u)\,\overline v\,dx,
\end{equation}
Appendix~A of \cite{ConstantinDritschelGermain2025} gives
\[
  \nabla_{\omega}E_\sigma(u)
  =
  \frac{8}{\pi}\,\partial_x\mathcal C_\sigma[u].
\]
Therefore, \eqref{eq:filamentation} is the Hamiltonian flow generated by
$\frac{\pi}{8}E_\sigma$ for the symplectic form \eqref{eq:symp_form}.
Equivalently, it is the flow generated by $E_\sigma$ after a constant
rescaling of time.  

The first term in \eqref{eq:energy} is comparable to the fourth power of the Gagliardo seminorm $[u]_{W^{1/4,4}(\mathbb T)}$.
Since positive-frequency functions have zero mean, this seminorm is equivalent to the full $W^{1/4,4}$ norm.  Thus $W^{1/4,4}(\mathbb T)$ is the natural energy space \cite{Triebel1992}.

Besides $E_{\sigma}$, the momentum $P(u)=\int|u|^{2}$ and the mass $M(u)=\int|\partial_{x}^{-1/2}u|^{2}$ are
conserved, and for $\sigma=1$ so is the first Fourier coefficient $a_{1}$. The
equation is invariant under space translations $u\mapsto u(\cdot+x_{0})$, phase
rotations $u\mapsto e^{i\theta}u$, the scaling
$u(t,x)\mapsto \mu\,u(\mu^{2}t,x)$, and the space-time reversal
$u(t,x)\mapsto\overline{u}(-t,-x)$.
 
For $\sigma=0$, equation \eqref{eq:filamentation} is also the cubic normal form
of the Burgers-Hilbert equation
$\partial_{t}f+f\partial_{x}f=\mathcal{H}f$, which was written down by Marsden
and Weinstein \cite{MarsdenWeinstein1983} and proposed by Biello and Hunter
\cite{BielloHunter2010} as a universal model for Hamiltonian waves of constant
linear frequency, vorticity interfaces being the guiding example. The
Burgers-Hilbert equation has been much studied over the last decade; see
\cite{BressanNguyen2014,CastroCordobaZheng2023,
DahneGomezSerrano2023,HunterIfrimTataruWong2015,Yang2021} and the review \cite{Hunter2018}. In the regime of small
amplitudes, however, cubic terms cannot be discarded, and
\eqref{eq:filamentation} rather than the Burgers-Hilbert equation is the
correct asymptotic model \cite{ConstantinDritschelGermain2025}.
 
For equation \eqref{eq:filamentation}, local \(H^s\) solutions exist for \(s>3/2\),
while uniqueness is known for \(s>5/2\)
\cite{ConstantinDritschelGermain2025,HunterMorenoVasquezShuZhang2022}.
Global weak solutions in \(L^\infty_tW^{1/4,4}\) were constructed in
\cite{ConstantinDritschelGermain2025}, using the coercivity of the
Hamiltonian. 
The traveling waves of \eqref{eq:filamentation} are known at the level of
existence. Since $\mathcal{C}_{\sigma}[e^{ikx}]=(k-\sigma)e^{ikx}$, the wave supported on a single Fourier mode
\begin{equation}
  \Psi_{k}(t,x)=e^{\,i[kx+k(k-\sigma)t]},\qquad k\in\mathbb{N},
  \label{eq:explicit-tw}
\end{equation}
solves \eqref{eq:filamentation} for every $k$. In
\cite{ConstantinDritschelGermain2025} the wave $\Psi_{1}$ was shown to be
orbitally stable for $\sigma=0$, and $\Psi_{1}$ and $\Psi_{2}$ for $\sigma=1$.
Moreover, minimizing $E_{\sigma}$ under fixed mass and momentum produces
traveling waves in the energy space $W^{1/4,4}$.

For \(\sigma=1\), formula \eqref{eq:cubic-Fourier} shows that the nonlinearity is independent of the first Fourier coefficient
\[
  \mathcal C_1[v+Ae^{ix}]
  =
  \mathcal C_1[v]
  \qquad
  \text{for every }A\in\mathbb C
  \text{ and every }v\text{ such that }\mathbb P_+v=v.
\]
Therefore, evolution \eqref{eq:filamentation} has the symmetry
\[
  u(t,x)\longmapsto u(t,x)+Ae^{ix},
  \qquad A\in\mathbb C.
\]
In the Hamiltonian formulation, this symmetry corresponds via
Noether's theorem to the conservation of the two real components
of \(a_1\) recorded in
\cite{ConstantinDritschelGermain2025}.  Taking $v=Be^{ikx}$ with
$k\ge2$ gives $\mathcal C_{1}[Ae^{ix}+Be^{ikx}]=(k-1)|B|^{2}B\,e^{ikx}$,
so that
\[
  u(t,x)=Ae^{ix}+Be^{\,i[kx+k(k-1)|B|^{2}t]}
\]
is an exact solution. 
This family of solutions is reminiscent of the Rossby-Haurwitz waves of
planetary flows \cite{Haurwitz1940,Vallis2006}.
For the Euler equation on a rotating
sphere, the bifurcation of non-zonal flows of Rossby-Haurwitz type has been
studied in \cite{yuri1,yuri2,ConstantinGermain2022}.

More specifically, the numerical simulations in
\cite[Section~3.4]{DritschelConstantinGermain2025} consider
representative initial data supported on the second and third Fourier
modes. They suggest the formation of a finite-time wave-slope
singularity, interpreted there as the onset of filamentation. These
computations provide dynamical motivation for analyzing the local set
of traveling waves near the single-mode family.
We carry out this local analysis in the subspace $X^s$, consisting of positive-frequency profiles with real Fourier coefficients, or equivalently satisfying $\Phi(-x)=\overline{\Phi(x)}$ (see Section \ref{sec:functional}).

This restriction removes the neutral direction generated by phase rotations and spatial translations, providing a natural setting for the local bifurcation problem. Within $X^s$, we identify the critical parameters and construct the corresponding local real-analytic branches bifurcating from the explicit waves.

\subsection{Bifurcation problem}
\label{sub:bifurcation-problem}
 
Traveling waves of \eqref{eq:filamentation} are the solutions of the form
\begin{equation}\label{eq:tw_sol}
  u(t,x)=\Phi(x-ct)\,e^{i\omega t},
\end{equation}
where the profile $\Phi$ has positive spectrum, $c\in\mathbb{R}$ is the wave
speed and $\omega\in\mathbb{R}$ the temporal frequency. The cubic term
$\mathcal{C}_{\sigma}$ contains two factors $u$ and one factor $\overline{u}$
and commutes with translations, so that
$\mathcal{C}_{\sigma}\bigl[\Phi(\cdot-ct)e^{i\omega t}\bigr]
=e^{i\omega t}\,\mathcal{C}_{\sigma}[\Phi](\cdot-ct)$, and the ansatz gives
\begin{equation*}
  -c\,\partial_{x}\Phi+i\omega\,\Phi=\partial_{x}\mathcal{C}_{\sigma}[\Phi].
\end{equation*}
For positive frequency functions, we have
$\partial_{x}^{-1}=-i\Lambda^{-1}$. Applying $\partial_{x}^{-1}$ we obtain the
traveling wave equation
\begin{equation}\label{eq:TW}
  -c\,\Phi+\omega\,\Lambda^{-1}\Phi=\mathcal{C}_{\sigma}[\Phi],
\end{equation}
where, by \eqref{eq:cubic-Lambda},
\begin{equation}
  \mathcal{C}_{\sigma}[\Phi]
  =\mathbb P_{+}\Bigl[\,|\Phi|^{2}\Lambda\Phi-\Phi\,\Lambda|\Phi|^{2}
  -\sigma|\Phi|^{2}\Phi\Bigr].
  \label{eq:cubic-TW}
\end{equation}
We fix $\omega\in\mathbb{R}$ and $\sigma\in\{0,1\}$, and we choose a
wavenumber $k\geq 1$ with $k>\sigma$. Thus, we consider the wave
\begin{equation}
  \Phi_{0}(x)=A\,e^{ikx},\qquad A>0.
  \label{eq:mono}
\end{equation}
Since $|\Phi_0|^2=A^2$, $\Lambda|\Phi_0|^2=0$, and
$\mathbb P_+$ acts as the identity on $e^{ikx}$ because $k>0$, we have
\begin{equation*}
  \mathcal{C}_{\sigma}[\Phi_{0}]
  =\mathbb P_{+}\bigl[A^{2}\Lambda\Phi_{0}-0-\sigma A^{2}\Phi_{0}\bigr]
  =A^{3}(k-\sigma)\,e^{ikx},
\end{equation*}
while $\Lambda^{-1}\Phi_{0}=k^{-1}Ae^{ikx}$. Equation \eqref{eq:TW} evaluated at
$\Phi_{0}$ reads
\begin{equation*}
  -cA+\omega\frac{A}{k}=A^{3}(k-\sigma),
\end{equation*}
and, after division by $A\neq 0$, we derive
\begin{equation}
  -c+\frac{\omega}{k}=A^{2}(k-\sigma).
  \label{eq:dispersion}
\end{equation}
We take
\begin{equation*}
  \lambda=A^{2}>0
\end{equation*}
as bifurcation parameter and eliminate the speed
through \eqref{eq:dispersion}, that is
\begin{equation*}
  c(\lambda)=\frac{\omega}{k}-\lambda(k-\sigma).
\end{equation*}
Then, the traveling wave equation \eqref{eq:TW} becomes $\mathcal F(\Phi,\lambda)=0$, with
\begin{equation}
  \mathcal F(\Phi,\lambda):=
  -\Bigl(\frac{\omega}{k}-\lambda(k-\sigma)\Bigr)\Phi
  +\omega\Lambda^{-1}\Phi-\mathcal C_\sigma[\Phi].
  \label{eq:F}
\end{equation}
For every \(\lambda>0\), the profile
\[
\Phi_0(\lambda)=\sqrt{\lambda}\,e^{ikx}
\]
satisfies
\begin{equation}
\mathcal F\bigl(\Phi_{0}(\lambda),\lambda\bigr)=0.
  \label{eq:trivial-branch}
\end{equation}
The curve
\[
\lambda\longmapsto
\bigl(\Phi_0(\lambda),\lambda\bigr)
\]
will be referred to as the trivial branch.

Three features of this formulation deserve a comment.
\begin{itemize}
\item \emph{The redundancy of \(c\) and \(\omega\).} For
\(\Phi_0(\lambda)=\sqrt{\lambda}\,e^{ikx}\), the associated solution
\eqref{eq:tw_sol} depends on \(c\) and \(\omega\) only through the
combination \(\omega-kc\). Consequently, replacing \((c,\omega)\) by
\((c+\delta,\omega+k\delta)\) leaves the solution unchanged. Along the
trivial branch, the dispersion relation gives
\[
  \omega-kc(\lambda)=\lambda k(k-\sigma),
\]
so the corresponding solution
\[
  u(t,x)
  =
  \sqrt{\lambda}\,
  e^{\,i[kx+\lambda k(k-\sigma)t]}
\]
is independent of \(\omega\).

However, if $\Phi$ contains a second Fourier mode $q\neq k$, this
non-uniqueness disappears.  Indeed, the same transformation gives for the temporal frequency of the q-th mode
\[
  \omega-qc+(k-q)\delta.
\]
Since $q\neq k$, the resulting solution is unchanged only when
$\delta=0$.  Thus, for profiles containing more than one Fourier mode,
$c$ and $\omega$ are determined separately.  We therefore fix
$\omega$ in the bifurcation problem.  The critical values of $\lambda$
found below are positive only when $\omega<0$, or equivalently, by
\eqref{eq:dispersion}, when
\[
  c(\lambda)<-\lambda(k-\sigma).
\]
See discussion in Section \ref{sec:thresholds} for the further details.
\item \emph{The symmetries.}
Equation \eqref{eq:TW} is invariant under phase rotations and spatial
translations.  At the trivial profile $\Phi_{0}$, the infinitesimal
generators of these two symmetries are
\begin{equation}\label{eq_gen_symm}
     \left.\frac{d}{d\theta}
  \bigl(e^{i\theta}\Phi_{0}\bigr)\right|_{\theta=0}
  =i\Phi_{0},
  \qquad
  \left.\frac{d}{dx_{0}}
  \Phi_{0}(\,\cdot+x_{0})\right|_{x_{0}=0}
  =\partial_{x}\Phi_{0}
  =ik\Phi_{0}.
\end{equation}
They are therefore proportional and generate the same neutral
direction, namely $i\Phi_{0}$.  Rather than quotienting by the symmetry
group, we work in the closed real subspace $X^{s}$ introduced in
Section~\ref{sec:functional}.  Since $i\Phi_{0}$ has a purely imaginary
Fourier coefficient, it does not belong to $X^{s}$.  
\item 
\emph{The derivative loss.} The map $\mathcal F$ contains the term $|\Phi|^{2}\Lambda\Phi$
and, read term by term, loses one derivative: no implicit function theorem can be
applied directly. The null structure recalled after
\eqref{eq:cubic-Fourier} removes this loss, and we show in Section
\ref{sec:functional} that $\mathcal F$ is in fact real analytic on the considered functional space. The bifurcation problem then
falls within the classical analytic Lyapunov-Schmidt framework
\cite{BuffoniToland2003,CrandallRabinowitz1971,Kielhofer2012}, as used for
instance in \cite{ConstantinGermain2022,ConstantinStrauss2004} for other
problems in fluid mechanics.
\end{itemize}

\subsection{Results and plan of the paper}
\label{sub:results}
The proof follows the scheme of bifurcation from a simple
eigenvalue \cite{CrandallRabinowitz1971}, in the real-analytic setting
of \cite{BuffoniToland2003}.  One writes the traveling wave equation as
$\mathcal F(\Phi,\lambda)=0$ on $X^{s}\times(0,\infty)$, with the
explicit trivial branch $\Phi_{0}(\lambda)$, and checks that
$\mathcal F$ is real analytic. Then, we linearize along that branch
and we find the parameters $\lambda_{c}$ at which
$L(\lambda_{c})$ has a non-trivial kernel.  At such a value we must
verify that $L(\lambda_{c})$ is Fredholm of index zero with a
one-dimensional kernel.  The space then splits along the kernel and the
range, the range component is eliminated by the analytic implicit
function theorem, and the problem reduces to a scalar equation. 
Solving this equation requires the transversality condition, that is, the
vanishing eigenvalue must cross zero with non-zero speed.

More in details, Section \ref{sec:functional} sets up the functional framework. We
compute the exact Fourier symbol of the derivative-containing part of
$\mathcal{C}_{\sigma}$, which turns out to be $\min\{i,j,\ell,m\}$ over the
resonant set $i-j+\ell=m$, and deduce that
$\mathcal F:X^{s}\times(0,\infty)\to X^{s}$ is real analytic for $s>3/2$. We also record
the embedding $X^{s}\hookrightarrow W^{1/4,4}(\mathbb{T})$, so that all the
profiles constructed here belong to the energy space used in
\cite{ConstantinDritschelGermain2025}, while solving \eqref{eq:TW} in the
stronger $H^{s}$ sense.
 
Section \ref{sec:spectrum} computes the linearization
$L(\lambda)=D_{\Phi}\mathcal F(\Phi_{0}(\lambda),\lambda)$. It couples the frequency $r$
only with its reflection $2k-r$, and $X^{s}$ splits accordingly into
\begin{itemize}
  \item $k-1$ blocks of size $2\times 2$, on
        $\operatorname{span}_{\mathbb{R}}\{e^{iqx},e^{i(2k-q)x}\}$ with
        $1\leq q<k$;
  \item one scalar block at $r=k$, on which $L(\lambda)$ acts as
        multiplication by $-2\lambda(k-\sigma)$, hence never singular for
        $\lambda>0$ and $k>\sigma$;
  \item a diagonal high-frequency tail on $r\geq 2k$, with symbol
        $\mathcal D_{r}(\lambda)=\omega\bigl(\tfrac{1}{r}-\tfrac{1}{k}\bigr)
        -\lambda(k-\sigma)$.
\end{itemize}
On the tail the unbounded contribution $\lambda\Lambda h$ cancels
against the mixed non-local term. This is the linearized counterpart of the null
structure of Section \ref{sec:functional}.
 
Section \ref{sec:thresholds} determines the critical values. The
determinant of the $n$-th block is a quadratic polynomial in
$\lambda$ whose discriminant is governed by the integer
$Q(k,n,\sigma)=(k+n-\sigma)^{2}-n(2k-n)$. It has two positive simple roots
$\lambda^{(n)}_{\pm}$ when $\omega<0$, and none when $\omega\geq 0$. 
 When \(\omega<0\), the tail contributes a second, disjoint family of
critical values
\(\lambda_r^{\mathrm{tail}}\), $r\geq2k$, which increase to
\[
  \lambda_\infty=-\frac{\omega}{k(k-\sigma)}.
\]
At \(\lambda_\infty\) no individual tail block vanishes, since
\[
  \mathcal D_r(\lambda_\infty)=\frac{\omega}{r},
\]
but these symbols converge to zero as \(r\to\infty\). Hence
\(L(\lambda_\infty)\) is not Fredholm. Moreover,
\[
  \ker_{X^s}L(\lambda_\infty)
  =
  \begin{cases}
    \operatorname{span}_{\mathbb R}\{e^{ix}\},
      & (k,\sigma)=(2,1),\\
    \{0\}, & \text{otherwise}.
  \end{cases}
\]
See Remark~\ref{rmk:spherical-n1}.  A
uniform spectral gap on the tail must therefore be established before
applying Lyapunov-Schmidt reduction.
 We prove that this gap
holds automatically at every finite-block critical value, with the single
exception $(k,n,\sigma)=(2,1,1)$. On the other
hand, to guarantee global simplicity of the kernel we have an additional hypothesis. In fact, for $\sigma=0$ and $k=7$ the two
blocks $n=4$ and $n=6$ become singular at the same value $\lambda=-\omega/14$.
We give a sufficient condition which rules this out, namely that
$Q(k,n,\sigma)$ be not a perfect square.
 
Section \ref{sec:bifurcation} contains the main result. At a non-resonant
finite-block critical value the kernel is one-dimensional, spanned by a vector
supported on the two frequencies $n$ and $2k-n$, and a local real-analytic
branch of traveling waves bifurcates.
We also show that the leading correction to the bifurcation parameter vanishes,
$\lambda'(0)=0$.
 
Section \ref{sec:additional} treats the tail values
$\lambda^{\mathrm{tail}}_{r}$, which are always simple and give a separate
family of scalar bifurcations. 

In particular, when \(\omega<0\), for \(\sigma=0\) and $k=1$, the $2\times 2$ blocks
are absent, and the finite tail thresholds are the only values
\(\lambda>0\) at which \(L(\lambda)\) has a non-trivial kernel.
Their limit \(\lambda_\infty\) is nevertheless a
bifurcation point in the topological sense. See
Remark~\ref{rmk:accumulation}.

The results below concern the local bifurcation of traveling wave
profiles.  The critical values are those at which the linearization of
the profile equation \eqref{eq:TW} loses invertibility.  This condition
does not by itself determine the spectral or nonlinear stability of
the corresponding traveling waves for the evolution
\eqref{eq:filamentation}.  
Our results are therefore complementary to the available stability
theory, including the orbital stability of $\Psi_1$ for $\sigma=0$
and of $\Psi_1$ and $\Psi_2$ for $\sigma=1$
\cite{ConstantinDritschelGermain2025}, as well as the linear and
modulational stability of harmonic waves established by Biello and
Hunter \cite{BielloHunter2010} within a semiclassical approximation of
the $\sigma=0$ equation.
For the related Burgers-Hilbert equation,
spectral instability of a traveling wave has recently been established
by a computer-assisted argument
\cite{CastroGomezSerranoPascualCaballo2026}.

\subsection{Notation}
\label{sub:notation}
 
Throughout, $\mathbb{T}=\mathbb{R}/(2\pi\mathbb{Z})$, $\mathbb P_{+}$ is the projector
onto the frequencies $m\geq 1$, $\Lambda=|\partial_{x}|$ and $\Lambda^{-1}$ is
its inverse on the mean-free functions. We write
$\widehat{u}_{m}$ or $a_{m}$ for the $m$-th Fourier coefficient of $u$. The
parameters $\omega\in\mathbb{R}$, $\sigma\in\{0,1\}$ and the wavenumber
$k\geq 1$ with $k>\sigma$ are fixed once and for all; $\lambda=A^{2}>0$ is the
bifurcation parameter and $c=c(\lambda)$ is given by \eqref{eq:dispersion}. The
case $k=\sigma=1$ is excluded. There the main block of the linearization vanishes identically and the bifurcation theory developed below does not
apply. Constants denoted $C_{s}$ depend only on $s$ and may change from line to line.

Unless otherwise stated, \(L(\lambda)\) is regarded as an operator
on $X^s$, and all its kernels are computed in $X^s$.
On $X^s$ we use the real \(L^2\) pairing
\begin{equation}\label{eq:real-pairing}
    \langle u,v\rangle_{\mathbb R}
    :=
    \operatorname{Re}
    \left(
        \frac1{2\pi}\int_{\mathbb T}u(x)\overline{v(x)}\,dx
    \right).
\end{equation}

\section{Functional setting}
\label{sec:functional}

For \(s\geq0\), let
\[
    H^s_+(\mathbb T)
    =
    \left\{
        u(x)=\sum_{m\geq1}a_m e^{imx}:
        \sum_{m\geq1}m^{2s}|a_m|^2<\infty
    \right\}.
\]
Define the closed real subspace
\begin{equation}\label{eq:Xs}
    X^s
    =
    \left\{
        u\in H^s_+(\mathbb T):
        \widehat u_m\in\mathbb R\ \text{for every }m\geq1
    \right\}.
\end{equation}
This is a closed real linear subspace of $H^s_+(\mathbb T)$, and it
will always be regarded as a real Hilbert space.  
The generator $i\Phi_0$ of the symmetries \eqref{eq_gen_symm} has the single Fourier
coefficient $iA$ at the frequency $k$, which is purely imaginary
because $A>0$. Hence $i\Phi_0\notin X^s$, and the null direction
common to the two symmetries is removed by the restriction to $X^s$.
The map $\mathcal F$ defined in \eqref{eq:F} preserves $X^s$.

\begin{rmk}[Relation with the variational energy space]
Constantin, Dritschel, and Germain
\cite[Proposition~5.3 and Section~6.3]{ConstantinDritschelGermain2025}
construct constrained minimizers of the energy
$E_\sigma$ in the positive-frequency space
$W^{1/4,4}(\mathbb T)$. Their Euler-Lagrange equation has the form
\[
    \frac{4}{\pi}\mathcal C_\sigma[u]
    =
    a\,\Lambda^{-1}u+b\,u,
\]
and is therefore equivalent, after relabeling the Lagrange multipliers,
to the traveling wave equation~\eqref{eq:TW}.

On the one dimensional torus, the Rellich-Kondrachov theorem gives
\[
    H^s(\mathbb T)
    \hookrightarrow\hookrightarrow
    W^{1/4,4}(\mathbb T)
    \qquad\text{for every }s>\frac12.
\]
Therefore, in the regime $s>3/2$ considered here,
\[
    X^s
    \hookrightarrow\hookrightarrow
    W^{1/4,4}(\mathbb T).
\]
Thus all the local bifurcating profiles constructed in this paper
belong to the variational energy space used in
\cite{ConstantinDritschelGermain2025}, while satisfying the
traveling wave equation in the stronger $H^s$ sense.
Notice that the two approaches are complementary. The variational theory provides
global constrained minimizers in the weak energy space, whereas the present analysis constructs and classifies branches bifurcating from single Fourier mode waves.
\end{rmk}

Recalling the definition of $\mathcal C_\sigma$ in \eqref{eq:cubic-Lambda} we prove now that the difference between $|u|^{2}\Lambda u$ and $u\Lambda|u|^{2}$ has a null structure which removes the apparent loss of one derivative.

\begin{lem}\label{lem:null}
Let
\[
    G[u]
    :=
    \mathbb P_+ \left[
        |u|^2\Lambda u-u\Lambda|u|^2
    \right].
\]
If \(u=\sum_{r\geq1}a_r e^{irx}\), then for every \(m\geq1\),
\begin{equation}\label{eq:null-symbol}
    \widehat{G[u]}_m
    =
    \sum_{\substack{i,j,\ell\geq1\\i-j+\ell=m}}
    \min\{i,j,\ell,m\}\,
    a_i\overline{a_j}a_\ell.
\end{equation}
Therefore, for every \(s>3/2\),
\begin{equation}\label{eq:null-Hs}
    \|G[u]\|_{H^s}
    \leq C_s\|u\|_{H^s}^3.
\end{equation}
Moreover,
\begin{equation}\label{eq:null-Lipschitz}
    \|G[u]-G[v]\|_{H^s}
    \leq
    C_s\big(\|u\|_{H^s}^2+\|v\|_{H^s}^2\big)
    \|u-v\|_{H^s}.
\end{equation}
\end{lem}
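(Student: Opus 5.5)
Expand both terms of $G[u]$ in Fourier modes, symmetrize in $i\leftrightarrow\ell$, and reduce the resulting symbol to the Biello--Hunter identity recalled after \eqref{eq:cubic-Fourier}. The estimates then come from the fact that the weight $\min\{i,j,\ell,m\}$ is bounded by the smallest frequency.

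\emph{Step 1: the Fourier expansion.} With $u=\sum_{r\ge1}a_re^{irx}$ we have
\[
|u|^2=\sum_{\ell,j\ge1}a_\ell\overline{a_j}\,e^{i(\ell-j)x},
\]
so the two terms read
\[
|u|^2\Lambda u=\sum_{i,j,\ell\ge1} i\,a_i\overline{a_j}a_\ell\,e^{i(i-j+\ell)x},
\qquad
u\Lambda|u|^2=\sum_{i,j,\ell\ge1} |\ell-j|\,a_i\overline{a_j}a_\ell\,e^{i(i-j+\ell)x}.
\]
Here we index so that the frequency hit by $\Lambda$ in $|u|^2\Lambda u$ is called $i$, and the factor outside $\Lambda|u|^2$ is also called $i$. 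The projection $\mathbb P_+$ keeps the modes $m=i-j+\ell\ge1$. Since $a_i\overline{a_j}a_\ell$ is symmetric in $(i,\ell)$, we may replace each symbol by its symmetrization. This gives
\[
\tfrac12\bigl(i+\ell-|\ell-j|-|i-j|\bigr).
\]
Using $|\ell-j|=|i-m|$ and $|i-j|=|\ell-m|$ on $i+\ell=j+m$, this becomes $\tfrac12(i+\ell-|i-m|-|\ell-m|)$, which equals $\min\{i,j,\ell,m\}$ by the identity already quoted. If desired, I would check that identity by a short case analysis, using the symmetries $i\leftrightarrow\ell$ and $(i,\ell)\leftrightarrow(j,m)$ to assume $i\le\ell$ and then locating $m$ relative to $i,\ell$. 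This proves \eqref{eq:null-symbol}. Absolute convergence of all sums for $u\in H^s$, $s>1/2$, justifies the manipulations.

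\emph{Step 2: the bound \eqref{eq:null-Hs}.} Let $b_r=|a_r|$ and bound
\[
m^s|\widehat{G[u]}_m|\le\sum_{i-j+\ell=m}\min\{i,j,\ell\}\,m^s\,b_ib_jb_\ell .
\]
Since $m\le i+\ell\le 2\max\{i,\ell\}$, we have $m^s\lesssim_s \max\{i,\ell\}^s$. The weight $\min\{i,j,\ell\}$ is at most the smallest of the three frequencies. Hence in each ordering the largest of $i,\ell$ carries $s$ derivatives, and one of the other two factors carries one derivative.

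This reduces the estimate to trilinear convolution bounds of the form
\[
\bigl\|\,(\langle\cdot\rangle^s b)*(\langle\cdot\rangle b)*\tilde b\,\bigr\|_{\ell^2}
\le \|\langle\cdot\rangle^s b\|_{\ell^2}\,\|\langle\cdot\rangle b\|_{\ell^1}\,\|b\|_{\ell^1},
\]
where $\tilde b_r=b_{-r}$ accounts for the conjugated factor. This is Young's inequality $\ell^2*\ell^1*\ell^1\subset\ell^2$. Finally, Cauchy--Schwarz gives $\|\langle\cdot\rangle b\|_{\ell^1}\lesssim\|u\|_{H^s}$ precisely when $s-1>1/2$, i.e. $s>3/2$. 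This is the main point: the null structure moves the derivative onto a factor that can be placed in $\ell^1$ at the cost of $s>3/2$.

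\emph{Step 3: the Lipschitz bound \eqref{eq:null-Lipschitz}.} $G$ is the restriction to the diagonal of the trilinear form
\[
T(u,v,w)_m=\sum_{i-j+\ell=m}\min\{i,j,\ell,m\}\,u_i\overline{v_j}w_\ell ,
\]
which obeys the same bound $\|T(u,v,w)\|_{H^s}\le C_s\|u\|_{H^s}\|v\|_{H^s}\|w\|_{H^s}$ by the argument of Step 2, since that argument never used that the three inputs coincide. Telescoping,
\[
G[u]-G[v]=T(u-v,u,u)+T(v,u-v,u)+T(v,v,u-v).
\]
Then $\|u\|\|v\|\le\tfrac12(\|u\|^2+\|v\|^2)$ gives \eqref{eq:null-Lipschitz}.

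I expect the only delicate step to be the bookkeeping in Step 2. One must assign the $s$ derivatives to the largest of $i,\ell$ rather than to $j$. This works because $m\le i+\ell$, and $j$ need not be comparable to $m$. Once that is done, every case reduces to the same Young-type estimate.
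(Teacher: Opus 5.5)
Your proposal is correct and follows essentially the same route as the paper. Both arguments expand in Fourier modes, symmetrize in $i\leftrightarrow\ell$ to obtain the $\min\{i,j,\ell,m\}$ symbol, prove the bound by Young's inequality $\ell^2*\ell^1*\ell^1\subset\ell^2$ with Cauchy--Schwarz requiring $s>3/2$, and get the Lipschitz estimate by telescoping the trilinear form. The only cosmetic difference is where the single derivative goes: you place it on $\min\{i,\ell\}$, while the paper uses $\min\{i,j,\ell,m\}\le j$ to place it on the conjugated factor, and both choices are valid.
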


\begin{proof}
The coefficient at frequency \(m>0\) is
\[
    \widehat{G[u]}_m
    =
    \sum_{\substack{i,j,\ell\geq1\\i-j+\ell=m}}
    \big(\ell-|i-j|\big)
    a_i\overline{a_j}a_\ell.
\]
The summation set and the product
\(a_i\overline{a_j}a_\ell\) are invariant under
\(i\leftrightarrow\ell\).  Symmetrizing gives the symbol
\[
    \frac{i+\ell-|i-j|-|\ell-j|}{2}.
\]
Since \(i+\ell=j+m\) and all four indices are positive, a direct
case distinction yields
\[
    \frac{i+\ell-|i-j|-|\ell-j|}{2}
    =
    \min\{i,j,\ell,m\},
\]
which proves \eqref{eq:null-symbol}.

For the estimate, \(m=i-j+\ell>0\) implies \(m\leq i+\ell\), hence
\[
    m^s\leq C_s(i^s+\ell^s).
\]
Moreover \(\min\{i,j,\ell,m\}\leq j\).  If
\[
    A_r=r^s|a_r|,
    \qquad
    B_r=r|a_r|,
    \qquad
    C_r=|a_r|,
\]
then \(A\in\ell^2\), while \(B,C\in\ell^1\) for \(s>3/2\):
\[
    \|B\|_{\ell^1}
    \leq
    \left(\sum_{r\geq1}r^{2s}|a_r|^2\right)^{1/2}
    \left(\sum_{r\geq1}r^{2-2s}\right)^{1/2},
\]
and the corresponding estimate for \(C\) only requires \(s>1/2\).

Using the definitions of $A$, $B$, and $C$, we obtain, for every
$m\geq1$,
\begin{align*}
  m^s\bigl|\widehat{G[u]}_m\bigr|
  &\leq
  C_s
  \sum_{\substack{i,j,\ell\geq1\\i-j+\ell=m}}
  (i^s+\ell^s)j\,|a_i|\,|a_j|\,|a_\ell| \\
  &=
  C_s
  \sum_{\substack{i,j,\ell\geq1\\i-j+\ell=m}}
  \bigl(A_iB_jC_\ell+C_iB_jA_\ell\bigr).
\end{align*}
Extend $A$, $B$, and $C$ by zero to $\mathbb Z$, and define the
reflected sequence
\[
  \widetilde B_n:=B_{-n},
  \qquad n\in\mathbb Z.
\]
Then
\[
  \sum_{\substack{i,j,\ell\geq1\\i-j+\ell=m}}
  A_iB_jC_\ell
  =
  (A*\widetilde B*C)_m,
\]
and similarly
\[
  \sum_{\substack{i,j,\ell\geq1\\i-j+\ell=m}}
  C_iB_jA_\ell
  =
  (C*\widetilde B*A)_m.
\]
The two convolutions are equivalent by
the symmetry $i\leftrightarrow\ell$. Therefore, Young's inequality gives
\begin{align*}
  \|G[u]\|_{H^s}
  &\leq
  C_s\|A*\widetilde B*C\|_{\ell^2(\mathbb Z)} \\
  &\leq
  C_s\|A\|_{\ell^2}
       \|\widetilde B\|_{\ell^1}
       \|C\|_{\ell^1} \\
  &=
  C_s\|A\|_{\ell^2}
       \|B\|_{\ell^1}
       \|C\|_{\ell^1}
  \leq
  C_s\|u\|_{H^s}^3,
\end{align*}
which proves \eqref{eq:null-Hs}.

For the Lipschitz estimate, for $f,g,h\in H^s_+$, we define
\[
  \widehat{\mathcal T(f,g,h)}_m
  :=
  \sum_{\substack{i,j,\ell\geq1\\i-j+\ell=m}}
  \min\{i,j,\ell,m\}\,
  \widehat f_i\,\overline{\widehat g_j}\,\widehat h_\ell,
  \qquad m\geq1.
\]
This map is trilinear over $\mathbb R$ and satisfies
\[
  G[u]=\mathcal T(u,u,u).
\]
The preceding convolution argument, applied to three distinct
arguments, gives
\begin{equation}\label{eq:bound_tril}
    \|\mathcal T(f,g,h)\|_{H^s}
  \leq
  C_s
  \|f\|_{H^s}\|g\|_{H^s}\|h\|_{H^s}.
\end{equation}
Moreover,
\begin{align*}
  G[u]-G[v]
  =
  \mathcal T(u-v,u,u)
  +\mathcal T(v,u-v,u) +\mathcal T(v,v,u-v).
\end{align*}
Consequently,
\begin{align*}
  \|G[u]-G[v]\|_{H^s}
  &\leq
  C_s
  \bigl(
    \|u\|_{H^s}^2
    +\|u\|_{H^s}\|v\|_{H^s}
    +\|v\|_{H^s}^2
  \bigr)
  \|u-v\|_{H^s} \\
  &\leq
  C_s
  \bigl(
    \|u\|_{H^s}^2+\|v\|_{H^s}^2
  \bigr)
  \|u-v\|_{H^s},
\end{align*}
which proves \eqref{eq:null-Lipschitz}.
\end{proof}

\begin{prop}[Regularity of the traveling wave map]\label{prop:regularity}
Let \(s>3/2\).  Then
\begin{equation}\label{eq:F-Xs}
    \mathcal F:X^s\times(0,\infty)\longrightarrow X^s
\end{equation}
defined in \eqref{eq:F} is well defined and real analytic.
\end{prop}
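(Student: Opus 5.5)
We split $\mathcal F$ into its pieces:
\[
\mathcal F(\Phi,\lambda)=-\frac{\omega}{k}\Phi+\lambda(k-\sigma)\Phi+\omega\Lambda^{-1}\Phi-G[\Phi]+\sigma\,\mathbb P_+\bigl[|\Phi|^2\Phi\bigr],
\]
where $G$ is the operator of Lemma~\ref{lem:null}. Analyticity will follow once each piece is shown to be either a bounded linear operator on $X^s$, the product of $\lambda$ with such an operator, or a bounded cubic (trilinear-diagonal) map $X^s\to X^s$. Bounded multilinear maps between real Banach spaces are real analytic, since their Taylor series terminate, and sums and products of analytic maps with the polynomial $\lambda\mapsto\lambda$ remain analytic. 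Hence it suffices to check boundedness and invariance of $X^s$ term by term.

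\emph{Linear terms.} The identity is trivially bounded. The operator $\Lambda^{-1}$ acts on positive frequencies by multiplication by $m^{-1}\le1$, so $\|\Lambda^{-1}\Phi\|_{H^s}\le\|\Phi\|_{H^s}$. Both multiply real Fourier coefficients by real numbers, so they preserve $X^s$. The term $(\Phi,\lambda)\mapsto\lambda(k-\sigma)\Phi$ is a continuous bilinear map $\mathbb R\times X^s\to X^s$.

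\emph{The null term $G$.} By Lemma~\ref{lem:null}, $G[\Phi]=\mathcal T(\Phi,\Phi,\Phi)$, where $\mathcal T$ is real-trilinear and bounded on $H^s_+$ by \eqref{eq:bound_tril}. Hence $G$ is a bounded homogeneous cubic polynomial and therefore analytic. Invariance follows from \eqref{eq:null-symbol}: if all $a_r\in\mathbb R$, then $\min\{i,j,\ell,m\}\,a_i\overline{a_j}a_\ell\in\mathbb R$, so $\widehat{G[\Phi]}_m\in\mathbb R$ for every $m\ge1$. The same holds for $\mathcal T$ restricted to $X^s$.

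\emph{The local cubic term.} Since $H^s(\mathbb T)$ is an algebra for $s>1/2$ and $\mathbb P_+$ is a bounded projection on $H^s$, the map $(f,g,h)\mapsto\mathbb P_+[f\overline g h]$ is bounded and real-trilinear. Its Fourier coefficients are $\sum_{i-j+\ell=m}a_i\overline{a_j}a_\ell$, which are real when all the inputs lie in $X^s$. Alternatively, one can read this off \eqref{eq:cubic-Fourier} with the constant symbol $-\sigma$, and the same Young convolution argument used in Lemma~\ref{lem:null} applies, with $\ell^1$ bounds on two of the factors.

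The only genuine obstacle is the apparent derivative loss in $|\Phi|^2\Lambda\Phi$, which would prevent $\mathcal F$ from mapping $X^s$ to itself. It is removed entirely by using the symmetrized form \eqref{eq:null-symbol} and the trilinear bound \eqref{eq:bound_tril} from Lemma~\ref{lem:null}, so the proof reduces to assembling these pieces. One should also remark that $\mathcal F$ extends analytically to all of $X^s\times\mathbb R$, being polynomial in $(\Phi,\lambda)$, so its restriction to $\lambda>0$ is real analytic.
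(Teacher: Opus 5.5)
Your proposal is correct and follows the paper's proof essentially step by step. The paper likewise writes $\mathcal C_\sigma[u]=\mathcal Q_\sigma(u,u,u)$ with $\mathcal Q_\sigma=\mathcal T-\sigma\mathcal S$, bounds $\mathcal T$ via Lemma~\ref{lem:null} and $\mathcal S$ via the algebra property, checks $X^s$-invariance through the reality of the Fourier coefficients, and concludes real analyticity from the fact that $\mathcal F$ is a bounded polynomial in $(u,\lambda)$.
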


\begin{proof}
Let $\mathcal T$ be the bounded real-trilinear map introduced in the
proof of Lemma~\ref{lem:null}, so that
\[
  G[u]=\mathcal T(u,u,u).
\]
Define also
\[
  \mathcal S(f,g,h)
  :=
  \mathbb P_+\bigl(f\,\overline g\,h\bigr).
\]
Since $H^s(\mathbb T)$ is a Banach algebra for $s>1/2$ and
$\mathbb P_+$ is bounded on $H^s(\mathbb T)$,
\[
  \|\mathcal S(f,g,h)\|_{H^s}
  \leq
  C_s\|f\|_{H^s}\|g\|_{H^s}\|h\|_{H^s}.
\]
Recalling \eqref{eq:bound_tril}, the map
\[
  \mathcal Q_\sigma
  :=
  \mathcal T-\sigma\mathcal S
\]
is bounded and real-trilinear on $(H^s_+)^3$, and
\[
  \mathcal C_\sigma[u]
  =
  \mathcal Q_\sigma(u,u,u).
\]
In particular,
\[
  \|\mathcal C_\sigma[u]\|_{H^s}
  \leq
  C_s\|u\|_{H^s}^3.
\]
We next verify that $\mathcal Q_\sigma$ preserves $X^s$.  Its Fourier
coefficients are
\[
  \widehat{\mathcal Q_\sigma(f,g,h)}_m
  =
  \sum_{\substack{i,j,\ell\geq1\\i-j+\ell=m}}
  \bigl(\min\{i,j,\ell,m\}-\sigma\bigr)
  \widehat f_i\,
  \overline{\widehat g_j}\,
  \widehat h_\ell .
\]
If $f,g,h\in X^s$, all their Fourier coefficients are real.  Since the
symbol $\min\{i,j,\ell,m\}-\sigma$ is also real for $\sigma=0,1$, every coefficient is real.  Hence
\[
  \mathcal Q_\sigma(X^s,X^s,X^s)\subseteq X^s.
\]
Moreover, $\widehat{\Lambda^{-1}u}_m = \frac{\widehat u_m}{m}$, and satisfies $\|\Lambda^{-1}u\|_{H^{s}}\leq\|u\|_{H^s}$.
It follows that, for every $u\in X^s$ and $\lambda>0$,
\[
  \mathcal F(u,\lambda)\in X^s
\]
and
\[
  \|\mathcal F(u,\lambda)\|_{H^s}
  \leq
  \left(
    \frac{|\omega|}{k}
    +\lambda|k-\sigma|
    +|\omega|
  \right)\|u\|_{H^s}
  +C_s\|u\|_{H^s}^3.
\]
Thus \eqref{eq:F-Xs} is well defined.
Finally, we write
\[
  \mathcal F(u,\lambda)
  =
  -\frac{\omega}{k}u
  +(k-\sigma)\lambda u
  +\omega\Lambda^{-1}u
  -\mathcal Q_\sigma(u,u,u).
\]
The first and third terms are bounded linear functions of $u$, the
second is a bounded bilinear function of $(\lambda,u)$, and the last is
a bounded cubic polynomial in $u$. Hence $\mathcal F$ is real analytic on
$X^s\times(0,\infty)$.
\end{proof}

\section{Linearized operator and its spectrum}
\label{sec:spectrum}
Recall that $\Phi_0(\lambda)=A e^{ikx}$, where $A=\sqrt{\lambda}$.
For each $\lambda>0$, we define the linearized operator along the trivial
branch by
\begin{equation}\label{eq:Llambda}
  L(\lambda)[h]
  :=
  D_\Phi\mathcal F(\Phi_0(\lambda),\lambda)[h]
  =
  -c(\lambda)h
  +\omega\Lambda^{-1}h
  -D\mathcal C_\sigma(\Phi_0(\lambda))[h],
  \qquad h\in X^s.
\end{equation}
The part of \eqref{eq:F} which does not come from $\mathcal C_\sigma$ acts
diagonally in the Fourier basis. In particular, on $e^{irx}$ it is multiplication by
\begin{equation}\label{eq:kinematic}
    \mathcal K_r(\lambda)
    :=
    -c(\lambda)+\frac{\omega}{r}
    =
    -\left(\frac{\omega}{k}-\lambda(k-\sigma)\right)+\frac{\omega}{r}
    =
    \omega\left(\frac1r-\frac1k\right)+\lambda(k-\sigma).
\end{equation}
Setting
\begin{equation}\label{eq:Omega}
    \Omega_r
    :=
    \omega\left(\frac1r-\frac1k\right),
\end{equation}
this reads $\mathcal K_r(\lambda)=\Omega_r+\lambda(k-\sigma)$. Notice that 
$\Omega_k=0$.  For each $1\leq q<k$ we let
\[
    p_q:=2k-q,
\]
so that
\begin{equation}\label{eq:Omega-explicit}
    \Omega_q=\omega\,\frac{k-q}{qk},
    \qquad
    \Omega_{p_q}=\omega\,\frac{q-k}{k(2k-q)}.
\end{equation}
Writing \(\lambda=A^2\), differentiation of \eqref{eq:cubic-Lambda} gives
\begin{multline}\label{eq:DC}
    D\mathcal C_\sigma(\Phi_0)[h]
    =
    \mathbb P_+\Big[
        \lambda\Lambda h
        +\lambda k\big(h+e^{2ikx}\overline h\big)\\
        -\lambda e^{ikx}
        \Lambda\big(e^{ikx}\overline h+e^{-ikx}h\big)
        -\sigma\lambda\big(2h+e^{2ikx}\overline h\big)
    \Big].
\end{multline}

\begin{lem}\label{lem:fourier-linear}
For $h=\sum_{r\geq1}h_r e^{irx}\in X^s$,
\begin{align}
    \widehat{D\mathcal C_\sigma(\Phi_0)[h]}_r
    &=
    \lambda\big(r+k-2\sigma-|r-k|\big)h_r
    +\mathbf 1_{\{1\leq r\leq2k-1\}}
    \lambda\big(k-\sigma-|r-k|\big)h_{2k-r}.
\label{eq:DC-Fourier}
\end{align}
Hence \(L(\lambda)\) couples \(r\) only with $2k-r$.
\end{lem}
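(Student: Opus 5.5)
We compute the Fourier coefficients of $D\mathcal C_\sigma(\Phi_0)[h]$ directly from the trilinear representation of Proposition~\ref{prop:regularity}, rather than from \eqref{eq:DC}. Since $\mathcal C_\sigma[u]=\mathcal Q_\sigma(u,u,u)$ with $\mathcal Q_\sigma$ bounded and real-trilinear, the derivative is
\[
D\mathcal C_\sigma(\Phi_0)[h]=\mathcal Q_\sigma(h,\Phi_0,\Phi_0)+\mathcal Q_\sigma(\Phi_0,h,\Phi_0)+\mathcal Q_\sigma(\Phi_0,\Phi_0,h).
\]
The symbol is symmetric under $i\leftrightarrow\ell$, so the first and third terms coincide. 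Since $\Phi_0=Ae^{ikx}$ has the single coefficient $A$ at frequency $k$, each sum over the resonant set $i-j+\ell=r$ collapses to one term.

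\emph{Terms with $h$ in an unconjugated slot.} Take $i=s$ (the frequency of $h$), $j=\ell=k$. Then $r=s$, and the symbol is $\min\{s,k,k,s\}-\sigma=\min\{r,k\}-\sigma$. Counting both copies, the contribution is
\[
2\lambda\bigl(\min\{r,k\}-\sigma\bigr)h_r=\lambda\bigl(r+k-|r-k|-2\sigma\bigr)h_r,
\]
using $2\min\{r,k\}=r+k-|r-k|$.

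\emph{Term with $h$ in the conjugated slot.} Take $i=\ell=k$, $j=s$. Then $r=2k-s$, so $s=2k-r$, and this requires $s\ge1$, i.e.\ $r\le 2k-1$. The symbol is $\min\{k,2k-r,k,r\}-\sigma$. For $1\le r\le 2k-1$ one has $\min\{r,2k-r,k\}=k-|r-k|$. Since $h\in X^s$ has real coefficients, $\overline{h_{2k-r}}=h_{2k-r}$, and the contribution is
\[
\mathbf 1_{\{1\le r\le 2k-1\}}\,\lambda\bigl(k-\sigma-|r-k|\bigr)h_{2k-r}.
\]
Adding the two contributions gives \eqref{eq:DC-Fourier}.

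For the coupling claim, recall that the remaining part of $L(\lambda)$ is diagonal with symbol $\mathcal K_r(\lambda)$ by \eqref{eq:kinematic}. Hence $\widehat{L(\lambda)h}_r$ involves only $h_r$ and $h_{2k-r}$.

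The only real obstacle is bookkeeping: being certain of the factor $2$ from the two unconjugated slots and of the conjugation on $h_{2k-r}$. As a cross-check, I would also expand \eqref{eq:DC} term by term:
\begin{itemize}
\item $e^{ikx}\Lambda(e^{ikx}\overline h)$ has coefficient $|2k-r|$-type weights at frequency $r$ on $\overline{h_{2k-r}}$;
\item $e^{ikx}\Lambda(e^{-ikx}h)$ gives $|r-k|h_r$;
\item the $\mathbb P_+$ projection kills $r\le0$.
\end{itemize}
This should reproduce the same coefficients, with $\lambda r-\lambda|r-k|+\lambda k-2\sigma\lambda$ on $h_r$ and $\lambda(k-|r-k|-\sigma)$ on $h_{2k-r}$ (note $|k-r|$ for the $\Lambda$ applied at frequency $r-k$).
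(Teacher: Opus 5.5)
Your proof is correct, but it takes a different route from the paper's. The paper differentiates the physical-space form \eqref{eq:cubic-Lambda} to obtain \eqref{eq:DC}. It then evaluates the four terms on a single mode $h_re^{irx}$, observes that each mode produces only itself and its reflection $2k-r$, and collects the two contributions at a fixed output frequency, with $\mathbb P_+$ discarding the reflection when $r\geq 2k$.

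You instead write $\mathcal C_\sigma[u]=\mathcal Q_\sigma(u,u,u)$ with the bounded real-trilinear form of Proposition~\ref{prop:regularity}, take the three-slot derivative, and read off the coefficients from the symbol $\min\{i,j,\ell,m\}-\sigma$. The sums collapse because $\Phi_0$ has a single Fourier mode, and the projection is automatic because the form only outputs $m\geq1$.

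What your route buys is brevity and a transparent linearized null structure. The diagonal symbol $2\lambda(\min\{r,k\}-\sigma)$ is visibly bounded and equals $2\lambda(k-\sigma)$ for $r\geq k$. In the paper the same fact appears as the cancellation of $\lambda r$ against $\lambda(r-k)$.

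The cost is that your computation rests on the symmetrization identity of Lemma~\ref{lem:null}. You need it only on the diagonal, $G[u]=\mathcal T(u,u,u)$, since the Fréchet derivative depends only on the cubic map and not on which trilinear representative you use. The paper's calculation uses only \eqref{eq:DC}, so it independently checks that identity at the linear level.

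There is one slip in your optional cross-check. The weight produced by $e^{ikx}\Lambda(e^{ikx}\overline h)$ at output frequency $r$ is $|k-(2k-r)|=|r-k|$, not a weight of $|2k-r|$ type. The coefficient you finally state, $\lambda(k-|r-k|-\sigma)$ on $h_{2k-r}$, is nevertheless correct.

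Also, avoid denoting the frequency of $h$ by $s$, which clashes with the Sobolev index.
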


\begin{proof}
Let us evaluate \eqref{eq:DC} on a single mode
$h=h_re^{irx}$, with $h_r\in\mathbb R$ and $r\geq1$ ($\overline h=h_re^{-irx}$). The four terms in \eqref{eq:DC} give, before the projection,
\begin{align*}
    \lambda\Lambda h
    &=
    \lambda r\,h_re^{irx},\\
    \lambda k\big(h+e^{2ikx}\overline h\big)
    &=
    \lambda k\,h_re^{irx}+\lambda k\,h_re^{i(2k-r)x},\\
    -\lambda e^{ikx}
    \Lambda\big(e^{ikx}\overline h+e^{-ikx}h\big)
    &=
    -\lambda|r-k|\,h_r\big(e^{i(2k-r)x}+e^{irx}\big),\\
    -\sigma\lambda\big(2h+e^{2ikx}\overline h\big)
    &=
    -2\sigma\lambda\,h_re^{irx}-\sigma\lambda\,h_re^{i(2k-r)x}.
\end{align*}
In the third line the argument of \(\Lambda\) is
\(h_r\big(e^{i(k-r)x}+e^{i(r-k)x}\big)\), and the two exponentials carry
the same weight \(|r-k|\).  Collecting the four lines, the bracket in
\eqref{eq:DC} equals
\begin{equation}\label{eq:mod_coup}
     \lambda\big(r+k-2\sigma-|r-k|\big)h_re^{irx}
    +\lambda\big(k-\sigma-|r-k|\big)h_re^{i(2k-r)x}.
\end{equation}
For a general $h$, linearity allows us to collect the contributions of
all its Fourier modes.  The first term above gives the diagonal
contribution
\[
  \lambda\bigl(r+k-2\sigma-|r-k|\bigr)h_r
\]
at the output frequency $r$.

A second contribution for the same frequency $r$ is obtained by considering the mode $2k-r$, provided that $2k-r\geq1$.  Indeed, replacing
$r$ by $2k-r$ in the second term in \eqref{eq:mod_coup} gives
\begin{align*}
  \lambda\bigl(k-\sigma-|(2k-r)-k|\bigr)
  h_{2k-r}e^{i(2k-(2k-r))x}=
  \lambda\bigl(k-\sigma-|r-k|\bigr)
  h_{2k-r}e^{irx}.
\end{align*}
No other mode can contribute to the frequency $r$, since
each mode produces only itself and its reflection about $k$.
Since $2k-r\geq1$ is equivalent to $1\leq r\leq2k-1$, collecting the
two contributions yields
\[
  \widehat{D\mathcal C_\sigma(\Phi_0)[h]}_r
  =
  \lambda\bigl(r+k-2\sigma-|r-k|\bigr)h_r
  +
  \mathbf 1_{\{1\leq r\leq2k-1\}}
  \lambda\bigl(k-\sigma-|r-k|\bigr)h_{2k-r}.
\]
This proves \eqref{eq:DC-Fourier}.
\end{proof}
By Lemma~\ref{lem:fourier-linear} and equation \eqref{eq:kinematic}, in the
ordered basis
\[
    \{e^{iqx},e^{ip_qx}\},
    \qquad 1\leq q<k,
\]
the restriction of \(L(\lambda)\) is the real symmetric matrix
\begin{equation}\label{eq:Mq}
    M^{(q)}(\lambda)
    =
    \begin{pmatrix}
        \Omega_q+\lambda(k-2q+\sigma)
        &
        -\lambda(q-\sigma)
        \\
        -\lambda(q-\sigma)
        &
        \Omega_{p_q}-\lambda(k-\sigma)
    \end{pmatrix}.
\end{equation}
Indeed \(q<k<p_q\) gives \(|q-k|=|p_q-k|=k-q\), whence
\begin{align*}
    M^{(q)}_{11}
    &=
    \Omega_q+\lambda(k-\sigma)
    -\lambda\big(q+k-2\sigma-(k-q)\big)\\
    &=\Omega_q+\lambda(k-2q+\sigma),\\
    M^{(q)}_{22}
    &=
    \Omega_{p_q}+\lambda(k-\sigma)
    -\lambda\big(p_q+k-2\sigma-(k-q)\big)\\
    &=\Omega_{p_q}-\lambda(k-\sigma),\\
    M^{(q)}_{12}=M^{(q)}_{21}
    &=
    -\lambda\big(k-\sigma-(k-q)\big)
    =-\lambda(q-\sigma),
\end{align*}
where the second line uses \(p_q+k-(k-q)=2k\).

The mode $r=k$ is a scalar block. Noting that $2k-r=k$, equation \eqref{eq:DC-Fourier} gives
\[
    D\mathcal C_\sigma(\Phi_0)[e^{ikx}]
    =
    \big[\lambda(2k-2\sigma)+\lambda(k-\sigma)\big]e^{ikx}
    =
    3\lambda(k-\sigma)e^{ikx}.
\]
Since \(\mathcal K_k(\lambda)=\lambda(k-\sigma)\),
\begin{equation}\label{eq:carrier-symbol}
    L(\lambda)[e^{ikx}]
    =
    \big(\lambda(k-\sigma)-3\lambda(k-\sigma)\big)e^{ikx}
    =
    -2\lambda(k-\sigma)e^{ikx}.
\end{equation}
For $r\geq2k$, the frequency $2k-r$ is non-positive and is
annihilated by $\mathbb P_+$.  Moreover the
contribution $\lambda\Lambda h$ cancels against the mixed non-local
term. Since $|r-k|=r-k$, the diagonal symbol of
\eqref{eq:DC-Fourier} is
\[
    \underbrace{\lambda r}_{\lambda\Lambda h}
    -\underbrace{\lambda(r-k)}_{\text{mixed term}}
    +\underbrace{\lambda k}_{\lambda kh}
    -\underbrace{2\sigma\lambda}_{2\sigma\lambda h}
    =
    2\lambda(k-\sigma).
\]
Thus the high-frequency ($r\geq2k$) tail is 
\begin{equation}\label{eq:tail}
    L(\lambda)[e^{irx}]
    =
    \big(\mathcal K_r(\lambda)-2\lambda(k-\sigma)\big)e^{irx}
    =
    \mathcal D_r(\lambda)e^{irx},
\end{equation}
with
\begin{equation}
    \mathcal D_r(\lambda)
    :=
    \Omega_r-\lambda(k-\sigma).
\end{equation}
Equivalently,
\begin{equation}\label{eq:tail-limit}
    \mathcal D_r(\lambda)
    =
    \mathcal D_\infty(\lambda)+\frac{\omega}{r},
    \qquad
    \mathcal D_\infty(\lambda)
    :=
    -\frac{\omega}{k}-\lambda(k-\sigma).
\end{equation}
Thus $X^s$ has the invariant block decomposition
\begin{equation}\label{eq:direct-sum}
    X^s
    =
    \bigoplus_{q=1}^{k-1}
    \operatorname{span}_{\mathbb R}
    \{e^{iqx},e^{i(2k-q)x}\}
    \oplus
    \operatorname{span}_{\mathbb R}\{e^{ikx}\}
    \oplus
    \overline{\operatorname{span}_{\mathbb R}
    \{e^{irx}:r\geq2k\}}^{\,X^s}.
\end{equation}
Ordering the basis accordingly as
\[
    \mathcal B
    =
    \big\{
         \mathcal B_1, \mathcal B_2,\dots, \mathcal B_{k-1},\
        e^{ikx},\
        e^{2ikx},\ e^{i(2k+1)x},\dots
    \big\},
    \qquad
     \mathcal B_q:=\{e^{iqx},e^{ip_qx}\},
\]
the operator \(L(\lambda)\) has the block-diagonal matrix
\begin{equation}\label{eq:block-matrix}
    \begin{pmatrix}
        \big[M^{(1)}\big] & \mathbf 0 & \cdots & \mathbf 0
            & \mathbf 0 & \mathbf 0 & \cdots\\
        \mathbf 0 & \big[M^{(2)}\big] & \cdots & \mathbf 0
            & \mathbf 0 & \mathbf 0 & \cdots\\
        \vdots & \vdots & \ddots & \vdots
            & \vdots & \vdots & \\
        \mathbf 0 & \mathbf 0 & \cdots & \big[M^{(k-1)}\big]
            & \mathbf 0 & \mathbf 0 & \cdots\\
        \mathbf 0 & \mathbf 0 & \cdots & \mathbf 0
            & -2\lambda(k-\sigma) & \mathbf 0 & \cdots\\
        \mathbf 0 & \mathbf 0 & \cdots & \mathbf 0
            & \mathbf 0 & \mathcal D_{2k} & \cdots\\
        \vdots & \vdots & & \vdots
            & \vdots & \vdots & \ddots
    \end{pmatrix},
\end{equation}
where each $M^{(q)}$, $1\leq q<k$, is the $2\times2$ symmetric
block \eqref{eq:Mq}, for $k$ we have the scalar
\eqref{eq:carrier-symbol}, and the tail consists of the scalars
$\mathcal D_r$ of \eqref{eq:tail}.  For $k=1$ the family of
$2\times2$ blocks is empty and the matrix is diagonal.

\begin{prop}
\label{prop:fredholm}
Fix \(\lambda_c>0\) and suppose that exactly one block
\(M^{(n)}(\lambda_c)\), \(1\leq n<k\), has a one-dimensional kernel.
Assume
\begin{equation}\label{eq:other-blocks}
    \det M^{(q)}(\lambda_c)\neq0
    \qquad
    \text{for every }1\leq q<k,\ q\neq n,
\end{equation}
and
\begin{equation}\label{eq:tail-gap}
    \gamma_c
    :=
    \inf_{r\geq2k}|\mathcal D_r(\lambda_c)|>0.
\end{equation}
If $k\neq\sigma$, then $L_c:=L(\lambda_c):X^s\to X^s$ is Fredholm of index zero, its kernel is one
dimensional, and, setting
\[
Y_c:=\bigl(\ker L_c\bigr)^{\perp_{\mathbb R}}\cap X^s ,
\]
one has $\operatorname{Ran}L_c=Y_c$ and $L_c|_{Y_c}:Y_c\to Y_c$ is invertible with bounded inverse.
\end{prop}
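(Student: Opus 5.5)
We will use the orthogonal block decomposition \eqref{eq:direct-sum}. The Fourier basis $\{e^{irx}\}_{r\geq1}$ is orthogonal both for the pairing \eqref{eq:real-pairing} and for the $H^s$ inner product, so the three summands in \eqref{eq:direct-sum} are mutually orthogonal closed subspaces of $X^s$. By Lemma~\ref{lem:fourier-linear} and \eqref{eq:block-matrix}, each of them is invariant under $L_c$. The plan is to invert $L_c$ separately on every summand except the kernel direction, and then to reassemble.

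\textbf{Finite blocks.} Write $X^s=V_{\rm fin}\oplus V_{\rm tail}$, where $V_{\rm fin}$ is the span of $e^{irx}$ for $1\leq r\leq 2k-1$ (dimension $2k-1$), and $V_{\rm tail}$ is the closure of the span of $e^{irx}$ for $r\geq2k$.
\begin{itemize}
\item On $V_{\rm fin}$, $L_c$ is the matrix $\operatorname{diag}(M^{(1)},\dots,M^{(k-1)},-2\lambda_c(k-\sigma))$.
\item The carrier entry is nonzero because $\lambda_c>0$ and $k\neq\sigma$ (indeed $k>\sigma$).
\item The blocks $q\neq n$ are invertible by \eqref{eq:other-blocks}.
\item $M^{(n)}(\lambda_c)$ is real symmetric with one-dimensional kernel, spanned by some $v_c$ supported on $\{n,2k-n\}$. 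Symmetry gives $\operatorname{Ran}M^{(n)}=(\ker M^{(n)})^{\perp}$ inside $\operatorname{span}\{e^{inx},e^{i(2k-n)x}\}$, with respect to the Euclidean product on coefficients.
\end{itemize}
The Euclidean product on these coefficients coincides, up to the factor $1$, with $\langle\cdot,\cdot\rangle_{\mathbb R}$, since the coefficients are real. Hence $\ker L_c\cap V_{\rm fin}=\operatorname{span}\{v_c\}$, and $L_c$ maps $V_{\rm fin}\cap v_c^{\perp}$ bijectively onto itself.

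\textbf{Tail.} On $V_{\rm tail}$, $L_c$ is the Fourier multiplier $\mathcal D_r(\lambda_c)$. The symbol is bounded, since $|\mathcal D_r|\leq|\omega|(1+1/k)+\lambda_c(k-\sigma)$, and by \eqref{eq:tail-gap} it is bounded below by $\gamma_c$. Therefore the multiplier $1/\mathcal D_r(\lambda_c)$ is bounded by $\gamma_c^{-1}$ and defines a bounded inverse on $V_{\rm tail}$ in the $H^s$ norm, because the $H^s$ norm is a weighted $\ell^2$ norm on Fourier coefficients. In particular $\ker L_c\cap V_{\rm tail}=\{0\}$ and $L_c(V_{\rm tail})=V_{\rm tail}$.

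\textbf{Reassembly.}
\begin{itemize}
\item Combining the two parts gives $\ker L_c=\operatorname{span}\{v_c\}$.
\item Since $v_c\in V_{\rm fin}$ is orthogonal to $V_{\rm tail}$, we have $Y_c=(V_{\rm fin}\cap v_c^{\perp})\oplus V_{\rm tail}$.
\item From the above, $\operatorname{Ran}L_c=Y_c$, which is closed and has codimension one.
\item $L_c|_{Y_c}$ is a bijection. Its inverse is the direct sum of a finite-dimensional inverse and the tail multiplier, hence bounded; this also follows from the open mapping theorem.
\end{itemize}
Thus $\dim\ker L_c=\operatorname{codim}\operatorname{Ran}L_c=1$, and $L_c$ is Fredholm of index zero.

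The only point needing care is that orthogonality with respect to $\langle\cdot,\cdot\rangle_{\mathbb R}$, rather than the $H^s$ product, still yields a closed complement on which the block structure holds. This is immediate here, because all decompositions are diagonal in the Fourier basis. The substantive input is the uniform tail gap \eqref{eq:tail-gap}; without it, as at $\lambda_\infty$, the range fails to be closed.
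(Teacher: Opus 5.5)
Your proposal is correct and follows essentially the same route as the paper. Both arguments invert every block except the selected one, using \eqref{eq:other-blocks}, the nonzero carrier symbol, and the tail multiplier bounded by $\gamma_c^{-1}$; both use the symmetry of $M^{(n)}(\lambda_c)$ to identify its range with the orthogonal complement of its kernel, and then reassemble with a bounded inverse. The only difference is cosmetic: the paper gets $\operatorname{Ran}L_c\subseteq Y_c$ from the global symmetry of $L_c$ and the closedness of $Y_c$ from the bounded projection $P_c$, whereas you read both directly off the orthogonal splitting $V_{\mathrm{fin}}\oplus V_{\mathrm{tail}}$.
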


\begin{proof}
Assumption \eqref{eq:other-blocks} makes every $2\times2$ block except
the selected one invertible.  Since there are only finitely many such
blocks, the norms of their inverses have a finite maximum.  The carrier
block is also invertible, because its symbol is
\[
  -2\lambda_c(k-\sigma)\neq0
\]
by $\lambda_c>0$ and $k\neq\sigma$.

On the tail, assumption \eqref{eq:tail-gap} gives
\[
  \sup_{r\geq2k}
  \left|\mathcal D_r(\lambda_c)^{-1}\right|
  \leq
  \gamma_c^{-1}.
\]
Hence the reciprocal sequence defines a bounded Fourier multiplier on
the tail subspace of $X^s$.  Thus every block other than the selected
one is invertible with bounded inverse.

Let
\[
  K_c:=\ker L_c.
\]
Since the selected block is the only singular block and has a
one-dimensional kernel,
\[
  \dim K_c=1.
\]
Every matrix $M^{(q)}(\lambda_c)$ is real symmetric, while the carrier
block and the high-frequency tail are real and diagonal.  Therefore
$L_c$ is symmetric with respect to this pairing
\[
  \langle L_cf,g\rangle_{L^2,\mathbb R}
  =
  \langle f,L_cg\rangle_{L^2,\mathbb R},
  \qquad f,g\in X^s.
\]
We now prove that
\[
  \operatorname{Ran}L_c
  =
  K_c^{\perp_{L^2}}\cap X^s.
\]
Let $y=L_cx$ and let $v\in K_c$.  By symmetry,
\begin{align*}
  \langle y,v\rangle_{L^2,\mathbb R}
  &=
  \langle L_cx,v\rangle_{L^2,\mathbb R}=
  \langle x,L_cv\rangle_{L^2,\mathbb R}
  =0.
\end{align*}
Hence
\[
  \operatorname{Ran}L_c
  \subseteq
  K_c^{\perp_{L^2}}\cap X^s.
\]
Conversely, let
\[
  y\in K_c^{\perp_{L^2}}\cap X^s.
\]
On every block other than the selected one, the corresponding component
of $y$ has a preimage by the bounded invertibility established above.
In particular, on the tail this preimage is given by
\[
  x_r
  :=
  \frac{y_r}{\mathcal D_r(\lambda_c)},
  \qquad r\geq2k,
\]
and belongs to $X^s$ by \eqref{eq:tail-gap}.

On the selected block, the real symmetric matrix
$M^{(n)}(\lambda_c)$ satisfies
\[
  \operatorname{Ran}M^{(n)}(\lambda_c)
  =
  \bigl(\ker M^{(n)}(\lambda_c)\bigr)^\perp.
\]
Since the component of $y$ in the
selected block is orthogonal to $K_c$, it also has a preimage.
Combining the preimages on all the blocks gives an element $x\in X^s$
such that
\[
  L_cx=y.
\]
Therefore
\[
  K_c^{\perp_{L^2}}\cap X^s
  \subseteq
  \operatorname{Ran}L_c,
\]
and consequently
\[
  \operatorname{Ran}L_c
  =
  Y_c
  =
  K_c^{\perp_{L^2}}\cap X^s.
\]
Let $v^*$ generate $K_c$.  Since $v^*$ belongs to the selected
two-dimensional block, it has finite Fourier support.  The
$L^2$-orthogonal projection onto $K_c$ is
\[
  P_ch
  :=
  \frac{\langle h,v^*\rangle_{L^2,\mathbb R}}
       {\|v^*\|_{L^2}^2}\,v^*.
\]
This projection is bounded on $X^s$.  Indeed,
\[
  \|P_ch\|_{X^s}
  \leq
  \frac{\|v^*\|_{X^s}}{\|v^*\|_{L^2}}
  \|h\|_{L^2}
  \leq
  C\|h\|_{X^s}.
\]
Moreover,
\[
  \operatorname{Ran}P_c=K_c,
  \qquad
  \ker P_c=Y_c.
\]
Thus $Y_c$ is a closed subspace of $X^s$.  Since
$\operatorname{Ran}L_c=Y_c$, the range of $L_c$ is closed.

The decomposition
\[
  X^s=K_c\oplus Y_c
\]
shows that
\[
  \operatorname{codim}\operatorname{Ran}L_c
  =
  \operatorname{codim}Y_c
  =
  \dim K_c
  =
  1.
\]
Equivalently,
\[
  \dim\operatorname{coker}L_c=1.
\]
It follows that $L_c$ is Fredholm of index zero.

It remains to consider the restriction to $Y_c$.  It is injective
because
\[
  Y_c\cap\ker L_c=\{0\}.
\]
It is also surjective onto $Y_c$.  Indeed, if $y\in Y_c$, then
$y=L_cx$ for some $x\in X^s$, and
\[
  x-P_cx\in Y_c,
  \qquad
  L_c(x-P_cx)=L_cx=y.
\]
Hence
\[
  L_c|_{Y_c}:Y_c\longrightarrow Y_c
\]
is a bounded bijection.  Since $Y_c$ is a closed subspace of the Banach
space $X^s$, it is itself a Banach space.  The bounded inverse theorem
therefore implies that
\[
  \bigl(L_c|_{Y_c}\bigr)^{-1}:Y_c\longrightarrow Y_c
\]
is bounded.
\end{proof}

\begin{rmk}\label{rmk:pointwise-gap}
The pointwise condition
\(\mathcal D_r(\lambda_c)\neq0\) for every finite \(r\) does not by
itself imply \eqref{eq:tail-gap}.  If
\(\mathcal D_\infty(\lambda_c)=0\) and \(\omega\neq0\), then
\(\mathcal D_r(\lambda_c)=\omega/r\) is non-zero for every \(r\) but
tends to zero.  In that case \(0\) belongs to the spectrum and the range
of \(L_c\) is not closed.
\end{rmk}

\section{Critical values}
\label{sec:thresholds}

Fix \(1\leq n<k\) and set
\[
    p:=2k-n.
\]
Expanding \eqref{eq:Mq} with \(q=n\),
\begin{multline}\label{eq:det-expanded}
    \det M^{(n)}(\lambda)
    =
    \lambda^2\big[(k-2n+\sigma)(\sigma-k)-(n-\sigma)^2\big]\\
    +\lambda\big[\Omega_n(\sigma-k)+\Omega_p(k-2n+\sigma)\big]
    +\Omega_n\Omega_p.
\end{multline}
The three coefficients are evaluated as follows.  The leading one is
\[
    (k-2n+\sigma)(\sigma-k)-(n-\sigma)^2
    =
    -\big(k^2-2nk+2n\sigma-\sigma^2\big)
    -\big(n^2-2n\sigma+\sigma^2\big)
    =
    -(k-n)^2 .
\]
For the linear one, \eqref{eq:Omega-explicit} gives
\(\Omega_n=\omega(k-n)/(nk)\) and
\(\Omega_p=-\omega(k-n)/\big(k(2k-n)\big)\), whence
\begin{align*}
    \Omega_n(\sigma-k)+\Omega_p(k-2n+\sigma)
    &=
    -\frac{\omega(k-n)}{k}\cdot
    \frac{(k-\sigma)(2k-n)+n(k-2n+\sigma)}{n(2k-n)}\\
    &=
    -\frac{2\omega(k-n)^2(k+n-\sigma)}{kn(2k-n)}.
\end{align*}
The constant one is
\[
    \Omega_n\Omega_p
    =
    -\frac{\omega^2(k-n)^2}{k^2n(2k-n)} .
\]
Since $1\leq n<k$, one has $k-n\neq0$. Factoring out
$-(k-n)^2$, equation \eqref{eq:det-expanded} becomes

\begin{align}
    \det M^{(n)}(\lambda)
    &=
    -(k-n)^2 P_n(\lambda),
    \label{eq:det-factor}
\end{align}
where
\begin{align}
    P_n(\lambda)
    &:=
    \lambda^2
    -\frac{2\omega(\sigma-k-n)}{kn(2k-n)}\lambda
    +\frac{\omega^2}{k^2n(2k-n)}.
    \label{eq:Pn}
\end{align}
Define
\begin{equation}\label{eq:Q}
    Q(k,n,\sigma)
    :=
    (k+n-\sigma)^2-n(2k-n).
\end{equation}
Then,
\begin{equation}\label{eq:Q-cases}
    Q(k,n,0)=k^2+2n^2,
    \qquad
    Q(k,n,1)=(k-1)^2+2n(n-1).
\end{equation}
Thus \(Q(k,n,\sigma)>0\) whenever \(k>n\geq1\).  The discriminant of
\eqref{eq:Pn} is
\begin{equation}\label{eq:discriminant}
    \Delta
    =
    \frac{4\omega^2}{k^2n(2k-n)}
    \left[\frac{(\sigma-k-n)^2}{n(2k-n)}-1\right]
    =
    \frac{4\omega^2\,Q(k,n,\sigma)}{k^2n^2(2k-n)^2},
\end{equation}
so \(\Delta>0\) for every \(\omega\neq0\), and
\begin{equation}\label{eq:sqrt-discriminant}
    \sqrt\Delta
    =
    \frac{2|\omega|\sqrt{Q(k,n,\sigma)}}{kn(2k-n)} .
\end{equation}
If \(\omega<0\), then \(|\omega|=-\omega\) and
\(\omega(\sigma-k-n)=-\omega(k+n-\sigma)\), so the two roots of
\(P_n\) are
\begin{equation}\label{eq:lambdapm}
    \lambda_\pm^{(n)}
    =
    \frac{-\omega}{kn(2k-n)}
    \left[
        k+n-\sigma
        \pm\sqrt{Q(k,n,\sigma)}
    \right].
\end{equation}
Notice that since $k+n-\sigma>0$ and $(k+n-\sigma)^2-Q(k,n,\sigma)=n(2k-n)>0$,
we have
\[
  0<\sqrt{Q(k,n,\sigma)}<k+n-\sigma.
\]
In particular,
\[
  k+n-\sigma-\sqrt{Q(k,n,\sigma)}>0.
\]
This implies that the two roots in \eqref{eq:lambdapm} are both positive.

Conversely, for \(\omega\geq0\) there is no positive simple root of
\eqref{eq:Pn}.  Indeed, by \eqref{eq:Pn} the product of the roots is
\(\omega^2/\big(k^2n(2k-n)\big)\) and their sum is
\(2\omega(\sigma-k-n)/\big(kn(2k-n)\big)\).  Since \(0<n<k<2k\), the
product is strictly positive whenever \(\omega\neq0\), so the two roots
have the same sign. Moreover, since \(\sigma\in\{0,1\}\) and \(k>n\geq1\) force
\(\sigma-k-n<0\), the sum has the sign of \(-\omega\).  Hence for
\(\omega>0\) both roots are negative, while for \(\omega=0\) one has
\(P_n(\lambda)=\lambda^2\), whose only root is \(\lambda=0\) and is
double.  Since the roots in \eqref{eq:lambdapm} are distinct,
\begin{equation}\label{eq:det-transverse}
    \partial_\lambda\det M^{(n)}(\lambda_c)\neq0
\end{equation}
at either critical value \(\lambda_c=\lambda_\pm^{(n)}\).  Explicitly,
\(\partial_\lambda\det M^{(n)}(\lambda)=-(k-n)^2P_n'(\lambda)\) and
\(P_n'\big(\lambda_\pm^{(n)}\big)=\pm\sqrt\Delta\), so that by
\eqref{eq:sqrt-discriminant}
\begin{equation}\label{eq:det-transverse-explicit}
    \partial_\lambda\det M^{(n)}\big(\lambda_\pm^{(n)}\big)
    =
    \mp(k-n)^2\sqrt\Delta
    =
    \mp\,\frac{2(k-n)^2|\omega|\sqrt{Q(k,n,\sigma)}}{kn(2k-n)} .
\end{equation}
The critical values arising from
$M^{(q)}(\lambda)$, $1\leq q<k$, do not exhaust the parameter values
at which $L(\lambda)$ is singular.  Indeed, the high-frequency tail is
diagonal and satisfies
\[
  L(\lambda)[e^{irx}]
  =
  \mathcal D_r(\lambda)e^{irx},
  \qquad r\geq2k.
\]
Thus the mode $e^{irx}$ belongs to the kernel when
$\mathcal D_r(\lambda)=0$.  For $\omega<0$, $k>\sigma$, and every
integer $r\geq2k$, this occurs when 
\begin{equation}\label{eq:lambda-tail}
  \lambda_r^{\mathrm{tail}}
  :=
  -\omega\frac{r-k}{kr(k-\sigma)}>0.
\end{equation}
Equivalently,
\[
  \lambda_r^{\mathrm{tail}}
  =
  \lambda_\infty\left(1-\frac{k}{r}\right),
  \qquad
  \lambda_\infty
  :=
  -\frac{\omega}{k(k-\sigma)}.
\]
Consequently,
\begin{equation}\label{eq:tail-accumulation}
  \lambda_r^{\mathrm{tail}}
  \longrightarrow
  \lambda_\infty
  \qquad\text{as }r\to\infty.
\end{equation}
At the limiting value $\lambda_\infty$ one has
\[
  \mathcal D_r(\lambda_\infty)=\frac{\omega}{r}\longrightarrow0.
\]
Hence no finite tail block is singular at $\lambda_\infty$, but the tail gap closes and $L(\lambda_\infty)$ is not Fredholm.  The
simple Fredholm bifurcation theorem therefore does not apply there.

The relation between the critical values arising from the finite
coupled blocks, the finite tail thresholds
$\lambda_r^{\mathrm{tail}}$, and the accumulation point
$\lambda_\infty$ is examined below.
 
\begin{lem}[Tail gap]\label{lem:gap}
Let $\omega<0$, $\sigma\in\{0,1\}$, and let $k,n\in\mathbb Z$ with $k>n\ge1$ and $k>\sigma$. Then
\begin{equation}\label{eq:ordering}
\lambda^{(n)}_-\;<\;\lambda^{\mathrm{tail}}_{2k}\;\le\;\lambda^{\mathrm{tail}}_r\;<\;\lambda_\infty
\;\le\;\lambda^{(n)}_+
\qquad\text{for every }r\ge2k,
\end{equation}
and the last inequality is strict except for $(k,n,\sigma)=(2,1,1)$, where
$\lambda^{(1)}_+=\lambda_\infty$. Consequently
\begin{equation}\label{eq:gapauto}
\inf_{r\ge2k}\bigl|\mathcal D_r(\lambda_c)\bigr|>0
\end{equation}
for every $\lambda_c\in\{\lambda^{(n)}_-,\lambda^{(n)}_+\}$, with the single exception
$\lambda_c=\lambda^{(1)}_+$ in the case $(k,n,\sigma)=(2,1,1)$.
\end{lem}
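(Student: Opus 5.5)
The plan is to reduce everything to the sign of the quadratic $P_n$ from \eqref{eq:det-factor}--\eqref{eq:Pn}. Since $P_n$ is a monic upward parabola with roots $\lambda^{(n)}_-<\lambda^{(n)}_+$, a point $\lambda$ satisfies $\lambda^{(n)}_-<\lambda<\lambda^{(n)}_+$ iff $P_n(\lambda)<0$. Likewise $\lambda^{(n)}_-\le\lambda\le\lambda^{(n)}_+$ iff $P_n(\lambda)\le0$, with equality exactly at the roots. The outer inequalities in \eqref{eq:ordering} therefore follow from two sign checks:
\[
P_n(\lambda^{\mathrm{tail}}_{2k})<0,\qquad P_n(\lambda_\infty)\le0 .
\]
The middle inequalities $\lambda^{\mathrm{tail}}_{2k}\le\lambda^{\mathrm{tail}}_r<\lambda_\infty$ are immediate from $\lambda^{\mathrm{tail}}_r=\lambda_\infty(1-k/r)$, which is increasing in $r$ and strictly below $\lambda_\infty$.

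To make the sign checks painless I would rescale. Set $\lambda=\frac{-\omega}{k}\,t$, $a:=k-\sigma>0$ and $N:=n(2k-n)>0$. Using $\omega<0$, one gets
\[
N\frac{k^2}{\omega^2}P_n(\lambda)=Nt^2-2(k+n-\sigma)t+1=:p(t),
\]
a positive multiple of $P_n$. In these variables $\lambda_\infty$ corresponds to $t=1/a$ and $\lambda^{\mathrm{tail}}_{2k}$ to $t=1/(2a)$.

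A direct computation should give
\[
a^2p(1/a)=N-2a(a+n)+a^2=-n^2+2n\sigma-(k-\sigma)^2 .
\]
For $\sigma=0$ this is $-(n^2+k^2)<0$. For $\sigma=1$ it equals $1-(n-1)^2-(k-1)^2$. This is $\le0$ because $k\ge2$, and it vanishes iff $n=1,k=2$. That is exactly the exceptional case $(2,1,1)$, where $\lambda_\infty$ is a root; since it then exceeds $\lambda^{\mathrm{tail}}_{2k}>\lambda^{(n)}_-$, it must be $\lambda^{(1)}_+$. Similarly,
\[
4a^2p\bigl(1/(2a)\bigr)=N-4an=n(4\sigma-2k-n)<0 ,
\]
since $2k+n\ge3>0$ when $\sigma=0$, and $2k+n\ge5>4$ when $\sigma=1$ (because $k\ge2$). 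This yields $\lambda^{(n)}_-<\lambda^{\mathrm{tail}}_{2k}<\lambda^{(n)}_+$, and combined with the previous check it gives \eqref{eq:ordering} with the stated strictness.

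For the gap \eqref{eq:gapauto} I would use \eqref{eq:tail-limit}. For fixed $\lambda_c$, $\mathcal D_r(\lambda_c)\to\mathcal D_\infty(\lambda_c)$, which is nonzero unless $\lambda_c=\lambda_\infty$. Moreover, $\mathcal D_r(\lambda_c)=0$ iff $\lambda_c=\lambda^{\mathrm{tail}}_r$. By \eqref{eq:ordering}, $\lambda^{(n)}_-$ lies strictly below every $\lambda^{\mathrm{tail}}_r$ and below $\lambda_\infty$. Outside the exceptional case, $\lambda^{(n)}_+$ lies strictly above all of them. Hence all $\mathcal D_r(\lambda_c)$ are nonzero and converge to a nonzero limit, so their infimum in absolute value is positive.

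The only step with any real content is the algebraic evaluation of $p(1/a)$ and the identification of its equality case. I expect this to be the main obstacle in the sense of bookkeeping, and I would double-check it against the explicit formula \eqref{eq:lambdapm} for $(k,n,\sigma)=(2,1,1)$. There $Q=2$ and $\lambda^{(1)}_+=\frac{-\omega}{6}(2+\sqrt2\,)$. This does not match $\lambda_\infty=-\omega/2$, which means $Q(2,1,1)$ should be $1$. Indeed $(2+1-1)^2-1\cdot3=1$, so $\lambda^{(1)}_+=\frac{-\omega}{6}\cdot3=\lambda_\infty$, as required.
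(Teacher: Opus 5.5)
Your proposal is correct and is essentially the paper's argument. The outer inequalities in \eqref{eq:ordering} come from the signs of $P_n$ at $\lambda^{\mathrm{tail}}_{2k}=\tfrac12\lambda_\infty$ and at $\lambda_\infty$; your rescaled values $-n(2k+n-4\sigma)$ and $-[(k-\sigma)^2+n(n-2\sigma)]$ are positive multiples of the paper's \eqref{eq:P2k} and \eqref{eq:Pinf}. The middle inequalities come from $\lambda^{\mathrm{tail}}_r=\lambda_\infty(1-k/r)$, and \eqref{eq:gapauto} follows from nonvanishing terms converging to $\mathcal D_\infty(\lambda_c)\neq0$. The only blemish is the stray ``$Q=2$'' in your final sanity check, which you correct yourself, since $Q(2,1,1)=1$ by \eqref{eq:Q-cases}.
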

 
\begin{proof}
The polynomial $P_n$ of \eqref{eq:Pn} has roots $\lambda^{(n)}_\pm$, so for real $x$
\begin{equation}\label{eq:signrule}
P_n(x)<0\iff \lambda^{(n)}_-<x<\lambda^{(n)}_+ .
\end{equation}
Substituting $\lambda^{\mathrm{tail}}_{2k}=\tfrac12\lambda_\infty$ and $\lambda_\infty$ into
\eqref{eq:Pn} and simplifying gives
\begin{align}
P_n\bigl(\lambda^{\mathrm{tail}}_{2k}\bigr)
&=-\,\frac{\omega^{2}\,(2k+n-4\sigma)}{4k^{2}(k-\sigma)^{2}(2k-n)},
\label{eq:P2k}\\[1mm]
P_n(\lambda_\infty)
&=-\,\frac{\omega^{2}\bigl[(k-\sigma)^{2}+n(n-2\sigma)\bigr]}{k^{2}n(2k-n)(k-\sigma)^{2}} .
\label{eq:Pinf}
\end{align}
Both denominators are positive, since $k>\sigma$ and $0<n<k<2k$.
 
In \eqref{eq:P2k} one has $2k+n-4\sigma>0$: for $\sigma=0$ this is immediate, and for $\sigma=1$
the constraint $k>\sigma$ forces $k\ge2$, whence $2k+n\ge5>4$. Therefore
$P_n(\lambda^{\mathrm{tail}}_{2k})<0$, and \eqref{eq:signrule} gives
$\lambda^{(n)}_-<\lambda^{\mathrm{tail}}_{2k}<\lambda^{(n)}_+$.
 
In \eqref{eq:Pinf} the bracket equals $k^{2}+n^{2}>0$ for $\sigma=0$, and $(k-1)^{2}+n(n-2)$ for
$\sigma=1$. In the latter case, if $n\ge2$ then $k>n$ forces $k\ge3$, so $(k-1)^{2}\ge4$ and
$n(n-2)\ge0$. If $n=1$ the bracket is $(k-1)^{2}-1$, positive for $k\ge3$ and zero exactly for
$k=2$. Hence $P_n(\lambda_\infty)<0$, giving $\lambda^{(n)}_-<\lambda_\infty<\lambda^{(n)}_+$,
except when $(k,n,\sigma)=(2,1,1)$, where $P_n(\lambda_\infty)=0$. In that case $\lambda_\infty$ is
a root of $P_n$, and since $\lambda_\infty>\lambda^{\mathrm{tail}}_{2k}>\lambda^{(n)}_-$ it is the
larger one, $\lambda_\infty=\lambda^{(1)}_+$. Together with the monotonicity of
$r\mapsto\lambda^{\mathrm{tail}}_r$ recorded in \eqref{eq:tail-accumulation}, this proves
\eqref{eq:ordering}.
 
Finally, $\mathcal D_r(\lambda_c)=0$ implies $\lambda_c=\lambda^{\mathrm{tail}}_r$ for some
$r\ge2k$, and $\mathcal D_\infty(\lambda_c)=0$ implies $\lambda_c=\lambda_\infty$. Both are
excluded by \eqref{eq:ordering} outside the stated exception. Since
$\mathcal D_r(\lambda_c)\to\mathcal D_\infty(\lambda_c)\neq0$, only finitely many indices satisfy
$|\mathcal D_r(\lambda_c)|<\tfrac12|\mathcal D_\infty(\lambda_c)|$, and each of them is non-zero,
hence \eqref{eq:gapauto}.
\end{proof}
 
\begin{lem}[Non-resonance criterion]\label{lem:nonres}
Let $\omega<0$, $\sigma\in\{0,1\}$, and let $k,n\in\mathbb Z$ with $k>n\ge1$ and $k>\sigma$. Let
$\lambda_c\in\{\lambda^{(n)}_-,\lambda^{(n)}_+\}$. If $Q(k,n,\sigma)$ is not a perfect square, then
\[
\det M^{(q)}(\lambda_c)\neq0
\qquad\text{for every }1\le q<k,\ q\neq n .
\]
\end{lem}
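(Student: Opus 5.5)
The proof is an irrationality argument. The critical values of every block, after the scaling $\lambda=-\omega\mu$, are roots of monic quadratics with rational coefficients. When $Q(k,n,\sigma)$ is not a perfect square, the quadratic of block $n$ is irreducible over $\mathbb Q$. Hence it is the minimal polynomial of $\mu_c$, and any other block sharing the root $\mu_c$ must have the same quadratic. A comparison of constant terms then forces $q=n$.

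\emph{Step 1: normalization.} For any $1\le q<k$, \eqref{eq:det-factor} gives $\det M^{(q)}(\lambda)=-(k-q)^2P_q(\lambda)$ with $(k-q)^2\neq0$. So $\det M^{(q)}(\lambda_c)=0$ is equivalent to $P_q(\lambda_c)=0$. Since $\omega<0$, substitute $\lambda=-\omega\mu$ in \eqref{eq:Pn}. Using $\omega(\sigma-k-q)=-\omega(k+q-\sigma)$, this gives $P_q(-\omega\mu)=\omega^2 p_q(\mu)$, where
\[
  p_q(\mu):=\mu^2-\frac{2(k+q-\sigma)}{kq(2k-q)}\,\mu+\frac{1}{k^2q(2k-q)}\in\mathbb Q[\mu].
\]
Thus $\mu_c:=\lambda_c/(-\omega)$ is a root of $p_n$. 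By \eqref{eq:lambdapm}, it is explicitly
\[
  \mu_c=\frac{k+n-\sigma\pm\sqrt{Q(k,n,\sigma)}}{kn(2k-n)}.
\]

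\emph{Step 2: irreducibility.} Since $Q(k,n,\sigma)$ is a positive integer that is not a perfect square, $\sqrt{Q}$ is irrational, and therefore $\mu_c\notin\mathbb Q$. Equivalently, the discriminant of $p_n$, namely $4Q/(k^2n^2(2k-n)^2)$, is not the square of a rational number. So $p_n$ is irreducible over $\mathbb Q$ and is the minimal polynomial of $\mu_c$.

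\emph{Step 3: comparison.} Suppose $\det M^{(q)}(\lambda_c)=0$ for some $q\neq n$. Then $p_q(\mu_c)=0$, so the minimal polynomial $p_n$ divides $p_q$. Both are monic of degree two, hence $p_q=p_n$. Equating the constant terms gives $q(2k-q)=n(2k-n)$, that is, $k^2-(k-q)^2=k^2-(k-n)^2$, so $(k-q)^2=(k-n)^2$. Since $q,n<k$, this yields $q=n$, a contradiction.

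I expect no real obstacle. The only care needed is in Step 1, to check that the rescaling by $-\omega>0$ produces rational coefficients independent of $\omega$. It does, because the coefficients of $P_q$ scale as $\omega$ and $\omega^2$ respectively.
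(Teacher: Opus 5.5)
Your proof is correct and follows the paper's argument: you rescale $\lambda=-\omega\mu$ to obtain monic quadratics $\widetilde P_q\in\mathbb Q[\mu]$, use the irrationality of $\sqrt{Q(k,n,\sigma)}$ to make $\widetilde P_n$ the minimal polynomial of $\mu_c$, and compare constant terms to get $q(2k-q)=n(2k-n)$. Your final step via $(k-q)^2=(k-n)^2$ is only an equivalent rewriting of the paper's factorization $(q-n)(2k-q-n)=0$.
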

 
\begin{proof}
Since $\omega<0$ we may write $\lambda=|\omega|\mu=-\omega\mu$. By \eqref{eq:Pn},
$P_n(\lambda)=\omega^{2}\widetilde P_n(\mu)$ with
\[
\widetilde P_n(\mu)=\mu^{2}-\frac{2(k+n-\sigma)}{kn(2k-n)}\,\mu+\frac{1}{k^{2}n(2k-n)}
\ \in\ \mathbb Q[\mu],
\]
with roots $\mu^{(n)}_\pm=\bigl[(k+n-\sigma)\pm\sqrt{Q(k,n,\sigma)}\bigr]\big/\bigl(kn(2k-n)\bigr)$.
As $k,n,\sigma$ are integers, $Q(k,n,\sigma)$ is a positive integer. If it is not a perfect square
then $\sqrt{Q}$ is irrational, so $\mu_c:=\lambda_c/|\omega|$ is irrational and $\widetilde P_n$ is
its minimal polynomial over $\mathbb Q$. Suppose $\det M^{(q)}(\lambda_c)=0$ for some $q\neq n$. By
\eqref{eq:det-factor} this means $\widetilde P_q(\mu_c)=0$. Since $\widetilde P_q$ has
degree two with rational coefficients, $\widetilde P_q=\widetilde P_n$. Comparing constant terms
gives $q(2k-q)=n(2k-n)$, i.e. $(q-n)(2k-q-n)=0$. As $q\neq n$, this forces $q+n=2k$, impossible
because $q,n<k$.
\end{proof}
 
\begin{ex}\label{ex:k7}
Let $\sigma=0$, $\omega<0$ and $k=7$. Then $Q(7,4,0)=81=9^{2}$ and $Q(7,6,0)=121=11^{2}$ are both perfect
squares, and by \eqref{eq:lambdapm}
\[
\lambda^{(4)}_+=\frac{-\omega}{7\cdot4\cdot10}\,(11+9)=\frac{-\omega}{14},
\qquad
\lambda^{(6)}_+=\frac{-\omega}{7\cdot6\cdot8}\,(13+11)=\frac{-\omega}{14} .
\]
Writing $\lambda=-\omega\mu$ as in Lemma~\ref{lem:nonres}, a direct computation gives, for
$1\le q<7$,
\begin{equation}\label{eq:k7}
\widetilde P_q\Bigl(\tfrac1{14}\Bigr)=-\,\frac{(q-4)(q-6)}{196\,q\,(14-q)} ,
\end{equation}
which vanishes for $q=4$ and $q=6$. Hence at $\lambda_c=-\omega/14$ precisely two blocks are
singular and $\ker L(\lambda_c)$ is two dimensional, spanned by vectors supported on the
frequency pairs $\{4,10\}$ and $\{6,8\}$. Each of the two roots is simple within its own block by
\eqref{eq:det-transverse}. By Lemma~\ref{lem:gap},
$\lambda_c>\lambda_\infty=-\omega/49$, so the uniform tail gap does hold at $\lambda_c$ (the tail plays no role here).
\end{ex}

\section{Local bifurcation}
\label{sec:bifurcation}

\begin{defn}[Non-resonant critical value]\label{def:nonres}
    Let $\omega<0$, $\sigma\in\{0,1\}$ and $k,n\in\mathbb Z$ with $k>n\ge1$ and $k>\sigma$. A value
    $\lambda_c\in\{\lambda^{(n)}_-,\lambda^{(n)}_+\}$ is a \emph{non-resonant critical
    value} if
    \begin{equation}\label{eq:nonres}
    \det M^{(q)}(\lambda_c)\neq0\quad\text{for every }1\le q<k,\ q\neq n,
    \qquad\text{and}\qquad
    \lambda_c\neq\lambda_\infty .
    \end{equation}
    \end{defn}

    By Lemma~\ref{lem:gap} the second condition in \eqref{eq:nonres} fails only for
    $(k,n,\sigma)=(2,1,1)$ at $\lambda_c=\lambda^{(1)}_+$. In every other case that lemma also
    provides the uniform tail gap \eqref{eq:tail-gap} required by Proposition~\ref{prop:fredholm}.
    The first condition holds when
    $Q(k,n,\sigma)$ is not a perfect square by Lemma~\ref{lem:nonres}.

\begin{thm}[Local bifurcation]\label{thm:main}
Let $s>3/2$ and let $\lambda_c$ be a non-resonant critical value in the sense of Definition~\ref{def:nonres}.
Then
\[
    \ker L(\lambda_c)=\operatorname{span}_{\mathbb R}\{v^*\}
\]
for a normalized vector
\[
    v^*(x)
    =
    v_n^*e^{inx}+v_p^*e^{ipx},
    \qquad
    p=2k-n,
\]
and \((\Phi_0(\lambda_c),\lambda_c)\) is a local bifurcation point in
\(X^s\times(0,\infty)\).

More precisely, there exist \(\alpha_0>0\) and real-analytic maps
\[
    \alpha\longmapsto\lambda(\alpha),
    \qquad
    \alpha\longmapsto w(\alpha)\in X^s,
    \qquad |\alpha|<\alpha_0,
\]
such that
\begin{align}
    \lambda(0)&=\lambda_c,
    \qquad
    w(0)=0,
    \\
    \langle w(\alpha),v^*\rangle_{\mathbb R}&=0,
    \qquad
    w(\alpha)=O_{X^s}(\alpha^2),
\end{align}
and
\begin{equation}\label{eq:branch}
    \Phi(\alpha)
    :=
    \Phi_0(\lambda(\alpha))
    +\alpha v^*
    +w(\alpha)
\end{equation}
satisfies
\[
    \mathcal F(\Phi(\alpha),\lambda(\alpha))=0.
\]
The corresponding wave speed is
\[
    c(\alpha)
    =
    \frac{\omega}{k}-\lambda(\alpha)(k-\sigma).
\]
In a neighborhood of the bifurcation point, every solution in $X^s$
lies either on the trivial branch or on this curve.
\end{thm}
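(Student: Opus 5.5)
The plan is to apply Crandall--Rabinowitz in the analytic form of Buffoni--Toland to a shifted map that vanishes identically on the trivial branch.

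\textbf{Setup and kernel.} Define
\[
  \mathcal G(v,\lambda):=\mathcal F(\Phi_0(\lambda)+v,\lambda),
  \qquad v\in X^s,
\]
so that $\mathcal G(0,\lambda)\equiv0$. The map $\lambda\mapsto\sqrt{\lambda}e^{ikx}$ is real analytic on $(0,\infty)$, so Proposition~\ref{prop:regularity} makes $\mathcal G$ real analytic near $(0,\lambda_c)$. Moreover $D_v\mathcal G(0,\lambda)=L(\lambda)$.

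Since $\lambda_c$ is non-resonant, Lemma~\ref{lem:gap} gives the tail gap \eqref{eq:tail-gap}, because $\lambda_c\neq\lambda_\infty$. Definition~\ref{def:nonres} gives \eqref{eq:other-blocks}. The block $M^{(n)}(\lambda_c)$ is singular by \eqref{eq:det-factor}. It is not the zero matrix, since its off-diagonal entry is $-\lambda_c(n-\sigma)$, and when $n=\sigma=1$ its $(2,2)$ entry is $\Omega_p-\lambda_c(k-1)<0$. Hence its kernel is one-dimensional.

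Proposition~\ref{prop:fredholm} now gives that $L_c$ is Fredholm of index zero, with $\ker L_c=\operatorname{span}\{v^*\}$. Here $v^*$ is supported on $\{n,p\}$ and normalized, and $\operatorname{Ran}L_c=Y_c=(v^*)^{\perp}\cap X^s$.

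\textbf{Transversality.} I need
\[
  \partial_\lambda L(\lambda_c)v^*\notin\operatorname{Ran}L_c,
  \quad\text{equivalently}\quad
  \langle \partial_\lambda M^{(n)}(\lambda_c)v^*,v^*\rangle\neq0 .
\]
Write $M=M^{(n)}$. Because $M(\lambda)$ is real symmetric and $\lambda_c$ is a simple zero of $\det M$ by \eqref{eq:det-transverse}, the eigenvalue $\mu(\lambda)$ of $M(\lambda)$ that vanishes at $\lambda_c$ is simple. The other eigenvalue $\nu$ is nonzero, since $\operatorname{tr}M\neq0$ whenever the kernel is one-dimensional. From $\det M=\mu\nu$ we get
\[
  \mu'(\lambda_c)\,\nu(\lambda_c)=\partial_\lambda\det M(\lambda_c)\neq0 .
\]
By Hellmann--Feynman, $\mu'(\lambda_c)=\langle M'(\lambda_c)v^*,v^*\rangle/|v^*|^2$, so this quantity is nonzero. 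This step is the key check, and it reduces entirely to \eqref{eq:det-transverse-explicit}.

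\textbf{Lyapunov--Schmidt reduction.} Write $v=\alpha v^*+w$ with $w\in Y_c$, and let $P_c$ be the projection from Proposition~\ref{prop:fredholm}. Solve $(I-P_c)\mathcal G(\alpha v^*+w,\lambda)=0$ for $w=W(\alpha,\lambda)$ by the analytic implicit function theorem; this uses the invertibility of $L_c|_{Y_c}$. We have $W(0,\lambda)=0$ and $\partial_\alpha W(0,\lambda_c)=0$.

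Because $\mathcal G(\cdot,\lambda)$ is cubic, the reduced scalar function
\[
  g(\alpha,\lambda)=\langle\mathcal G(\alpha v^*+W,\lambda),v^*\rangle
\]
factors as $g=\alpha\,\tilde g(\alpha,\lambda)$. Here $\tilde g$ is analytic, $\tilde g(0,\lambda_c)=0$, and $\partial_\lambda\tilde g(0,\lambda_c)=\mu'(\lambda_c)|v^*|^2\neq0$. The implicit function theorem then gives an analytic $\lambda(\alpha)$, and we set $w(\alpha):=W(\alpha,\lambda(\alpha))$.

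The estimate $w=O(\alpha^2)$ follows from $\partial_\alpha W(0,\lambda_c)=0$. Local uniqueness follows from the uniqueness in both implicit function theorems. Writing the nontrivial solutions in the form \eqref{eq:branch} is a relabeling, since the trivial branch was subtracted off.

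The formula for $c(\alpha)$ is immediate from the definition of $c(\lambda)$. The remark that $\lambda'(0)=0$ is not needed here; it would follow from the quadratic term $\langle D^2\mathcal G(0,\lambda_c)[v^*,v^*],v^*\rangle$, which vanishes because $\mathcal C_\sigma$ has no quadratic part at $\Phi_0$ in the $v^*$ direction for frequency-parity reasons.
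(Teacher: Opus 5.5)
Your route is the paper's own: the same shifted map, Proposition~\ref{prop:fredholm} fed by Lemma~\ref{lem:gap} and the non-resonance condition, transversality from $\partial_\lambda\det M^{(n)}(\lambda_c)=\mu'(\lambda_c)\nu(\lambda_c)$ plus Hellmann--Feynman, and the analytic Lyapunov--Schmidt reduction with $g=\alpha\widetilde g$. There is one false claim, and it is easy to repair.

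The false claim is in your check that $M^{(n)}(\lambda_c)$ is not the zero matrix when $n=\sigma=1$. For $\omega<0$ one has
$\Omega_p=\omega(1-k)/\bigl(k(2k-1)\bigr)>0$, not $<0$. At the admissible critical value $\lambda_c=\lambda_-^{(1)}=-\omega/\bigl(k(2k-1)\bigr)$, the $(2,2)$ entry $\Omega_p-\lambda_c(k-1)$ is exactly zero. This is why the kernel there is spanned by $e^{i(2k-1)x}$; see Remark~\ref{rmk:spherical-n1}. The entry is negative only at $\lambda_+^{(1)}$.

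The conclusion still holds, for either of two reasons:
\begin{itemize}
\item At $\lambda_-^{(1)}$ the $(1,1)$ entry equals $(k-1)(\omega/k+\lambda_c)=2(k-1)^2\omega/\bigl(k(2k-1)\bigr)\neq0$.
\item More cleanly, and this is essentially the paper's one-line argument: $M^{(n)}$ is affine in $\lambda$. If $M^{(n)}(\lambda_c)=0$, then $\det M^{(n)}(\lambda)=(\lambda-\lambda_c)^2\det\partial_\lambda M^{(n)}$. This contradicts the simple zero \eqref{eq:det-transverse}.
\end{itemize}
The second reason treats all $(k,n,\sigma)$ at once, so you do not need a separate case analysis.

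A minor imprecision: the factorization $g=\alpha\widetilde g$ comes from $g(0,\lambda)=0$, which follows from $W(0,\lambda)=0$ and $\mathcal G(0,\lambda)=0$. It does not come from $\mathcal G$ being cubic.
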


\begin{proof}
Define the shifted map
\begin{equation}\label{eq:shifted-map}
    \mathscr F(v,\lambda)
    :=
    \mathcal F(\Phi_0(\lambda)+v,\lambda).
\end{equation}
By Proposition~\ref{prop:regularity} and the analyticity of
$\lambda\mapsto\Phi_0(\lambda)$ on $(0,\infty)$, the map $\mathscr F:X^s\times(0,\infty)\longrightarrow X^s$ is real analytic near $(0,\lambda_c)$, and
$\mathscr F(0,\lambda)=0$.

At \(\lambda=\lambda_c\),
\[
    D_v\mathscr F(0,\lambda_c)=L(\lambda_c)=L_c.
\]
The selected determinant has a simple zero by
\eqref{eq:det-transverse}. Hence
\(M^{(n)}(\lambda_c)\) has rank one.  Hypothesis \eqref{eq:other-blocks} of Proposition~\ref{prop:fredholm} is the non-resonance condition assumed in Definition~\ref{def:nonres}, and the uniform tail gap \eqref{eq:tail-gap} is given by Lemma~\ref{lem:gap}. Then, Proposition~\ref{prop:fredholm} gives a one dimensional kernel and a bounded inverse on its complement.

Normalize $v^*$ by
\(\langle v^*,v^*\rangle_{\mathbb R}=1\), and define
\[
    Pu:=\langle u,v^*\rangle_{\mathbb R}v^*,
    \qquad
    Q:=I-P.
\]
Because $v^*$ has finite Fourier support, both $P$ and $Q$ are
bounded on $X^s$.
By Proposition~\ref{prop:fredholm},
\[
  \operatorname{Ran}L_c
  =
  (\ker L_c)^{\perp_{\mathbb R}}\cap X^s
  =
  QX^s, \qquad\text{and}\qquad
  L_c|_{QX^s}:QX^s\longrightarrow QX^s
\]
is invertible with bounded inverse.

Write
\[
    v=\alpha v^*+w,
    \qquad
    w\in QX^s.
\]
The auxiliary equation is
\begin{equation}\label{eq:aux-Hs}
    Q\mathscr F(\alpha v^*+w,\lambda)=0.
\end{equation}
Its derivative with respect to $w$ at
$(w,\alpha,\lambda)=(0,0,\lambda_c)$ is
\[
  D_w\left[Q\mathscr F(\alpha v^*+w,\lambda)\right]
  \big|_{(0,0,\lambda_c)}
  =
  L_c|_{QX^s}:QX^s\longrightarrow QX^s.
\]
Indeed, $Qh=h$ and $QL_ch=L_ch$ for every $h\in QX^s$.
Since $L_c|_{QX^s}$ is invertible with bounded inverse, the analytic
implicit function theorem gives a unique real-analytic map
\[
  w=w(\alpha,\lambda)\in QX^s
\]
near $(0,\lambda_c)$, with
\[
  w(0,\lambda)=0.
\]
Moreover, differentiating \eqref{eq:aux-Hs}
with respect to $\alpha$ at $(w,\alpha,\lambda)=(0,0,\lambda_c)$ gives
\[
  L_c|_{QX^s}\,\partial_\alpha w(0,\lambda_c)
  +
  QL_cv^*
  =
  0.
\]
Since $L_cv^*=0$ and the restriction of $L_c$ to $QX^s$ is
invertible, it follows that
\[
  \partial_\alpha w(0,\lambda_c)=0.
\]
It remains to solve
\[
  P\mathscr F\bigl(\alpha v^*+w(\alpha,\lambda),\lambda\bigr)=0.
\]
Since $Pu=\langle u,v^*\rangle_{\mathbb R}v^*$, this equation is
equivalent to 
\begin{equation}\label{eq:scalar-g}
  g(\alpha,\lambda)
  :=
  \left\langle
    \mathscr F\bigl(\alpha v^*+w(\alpha,\lambda),\lambda\bigr),
    v^*
  \right\rangle_{\mathbb R}
  =0.
\end{equation}
Since \(g(0,\lambda)=0\), there is a real-analytic function
\(\widetilde g\) such that
\[
    g(\alpha,\lambda)=\alpha\widetilde g(\alpha,\lambda).
\]
Moreover,
\begin{equation}\label{eq:der_g}
     \widetilde g(0,\lambda_c)
  =
  \partial_\alpha g(0,\lambda_c)
  =
  \langle L_cv^*,v^*\rangle_{\mathbb R}
  =0.
\end{equation}
Let \(\mu_1(\lambda)\) and \(\mu_2(\lambda)\) be the real-analytic eigenvalues of
the selected symmetric block $M^{(n)}$. Recalling that $0=\det M^{(n)}(\lambda_c)=
\mu_1(\lambda_c)\mu_2(\lambda_c)$ we fix 
\(\mu_1(\lambda_c)=0\) and
\[
    \mu_2(\lambda_c)\neq0.
\]
Then,
\[
    \partial_\lambda\det M^{(n)}(\lambda_c)
    =
    \mu_1'(\lambda_c)\mu_2(\lambda_c).
\]
By equation \eqref{eq:det-transverse}, we get \(\mu_1'(\lambda_c)\neq0\). 
As in \eqref{eq:der_g} and using $w(0,\lambda)=0$,
\[
\widetilde g(0,\lambda)=\bigl\langle L(\lambda)\bigl(v^*+\partial_\alpha w(0,\lambda)\bigr),v^*\bigr\rangle_{\mathbb R},
\]
whence
\[
\begin{aligned}
\partial_\lambda\widetilde g(0,\lambda_c)
&=\bigl\langle L'(\lambda_c)\bigl(v^*+\partial_\alpha w(0,\lambda_c)\bigr),v^*\bigr\rangle_{\mathbb R}
 +\bigl\langle L_c\,\partial^2_{\lambda\alpha}w(0,\lambda_c),v^*\bigr\rangle_{\mathbb R}\\
&=\bigl\langle L'(\lambda_c)v^*,v^*\bigr\rangle_{\mathbb R},
\end{aligned}
\]
because $\partial_\alpha w(0,\lambda_c)=0$, while the second vanishes
since $L_c$ is symmetric for \eqref{eq:real-pairing} and $L_cv^*=0$. 
Here $L'(\lambda_c):=\frac{d}{d\lambda}D_\Phi\mathcal F(\Phi_0(\lambda),\lambda)\big|_{\lambda=\lambda_c}$
denotes the total derivative along the branch, so that it includes $\Phi_0(\lambda)$. 

Finally $L'(\lambda_c)$
is represented by $\partial_\lambda M^{(n)}(\lambda_c)$ on
$\mathrm{span}_{\mathbb R}\{e^{inx},e^{ipx}\}$.
Let
\[
  \mathbf v^*:=(v_n^*,v_p^*)^T\in\mathbb R^2
\]
be the coordinate vector of $v^*$ in the basis
$\{e^{inx},e^{ipx}\}$. Let $\mathbf v(\lambda)\in\mathbb R^2$ be a
normalized eigenvector associated with the simple eigenvalue
$\mu_1(\lambda)$, chosen so that
\[
  \mathbf v(\lambda_c)=\mathbf v^*.
\]
Differentiating
\[
  M^{(n)}(\lambda)\mathbf v(\lambda)
  =
  \mu_1(\lambda)\mathbf v(\lambda)
\]
at $\lambda=\lambda_c$ and taking the scalar product with
$\mathbf v^*$ gives
\[
  \bigl\langle L'(\lambda_c)v^*,v^*\bigr\rangle_{\mathbb R}
  =
  \bigl\langle
    \partial_\lambda M^{(n)}(\lambda_c)\mathbf v^*,
    \mathbf v^*
  \bigr\rangle_{\mathbb R^2}
  =
  \mu_1'(\lambda_c)\neq0.
\]
Thus, by the above computation, we have 
\begin{equation}\label{eq:functional-transversality}
\partial_\lambda\widetilde g(0,\lambda_c)=\bigl\langle L'(\lambda_c)v^*,v^*\bigr\rangle_{\mathbb R}
=\mu_1'(\lambda_c)\neq0 .
\end{equation}
Therefore the implicit function theorem yields a unique
real-analytic function $\lambda=\lambda(\alpha)$ such that
\[
  \lambda(0)=\lambda_c,
  \qquad
  \widetilde g(\alpha,\lambda(\alpha))=0.
\]
Since $w(0,\lambda)=0$, we have $\partial_\lambda w(0,\lambda_c)=0$.
Together with $\partial_\alpha w(0,\lambda_c)=0$ and
$\lambda(0)=\lambda_c$, the chain rule gives
\begin{align*}
  \left.
  \frac{d}{d\alpha}
  w\bigl(\alpha,\lambda(\alpha)\bigr)
  \right|_{\alpha=0}
  &=
  \partial_\alpha w(0,\lambda_c)
  +
  \partial_\lambda w(0,\lambda_c)\lambda'(0)=0.
\end{align*}
Since $\alpha\mapsto w\bigl(\alpha,\lambda(\alpha)\bigr)=:w(\alpha)$ is real
analytic, it follows that $w(\alpha)=O_{X^s}(\alpha^2)$.

Finally, every nearby solution can be written uniquely as
$v=\alpha v^*+w$ with $w\in QX^s$. The auxiliary equation forces
$w=w(\alpha,\lambda)$, while the remaining equation is
$\alpha\widetilde g(\alpha,\lambda)=0$. Thus either $\alpha=0$ and the
solution belongs to the trivial branch, or $\alpha\neq0$ and
$\lambda=\lambda(\alpha)$, so that the solution belongs to the
bifurcating curve.
\end{proof}

\begin{cor}[Sufficient condition]\label{cor:square}
Let $s>3/2$, $\omega<0$, $\sigma\in\{0,1\}$, and let
$k,n\in\mathbb Z$ satisfy $k>n\ge1$ and $k>\sigma$.
If $Q(k,n,\sigma)$ is not a perfect square, then both
$\lambda^{(n)}_-$ and $\lambda^{(n)}_+$ are non-resonant critical values, and Theorem~\ref{thm:main}
applies at each of them.
\end{cor}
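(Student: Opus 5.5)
The plan is to verify the two conditions of Definition~\ref{def:nonres} for $\lambda_c\in\{\lambda^{(n)}_-,\lambda^{(n)}_+\}$, and then invoke Theorem~\ref{thm:main}. The standing hypotheses $\omega<0$, $\sigma\in\{0,1\}$, $k>n\ge1$ and $k>\sigma$ are exactly those of Lemma~\ref{lem:gap}, Lemma~\ref{lem:nonres} and Definition~\ref{def:nonres}. By \eqref{eq:lambdapm}, $\lambda^{(n)}_\pm$ are well-defined positive simple roots of $P_n$, so they are critical values in the required sense.

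For the first condition in \eqref{eq:nonres}, namely $\det M^{(q)}(\lambda_c)\neq0$ for $q\neq n$, I apply Lemma~\ref{lem:nonres} directly. Its only extra hypothesis is that $Q(k,n,\sigma)$ is not a perfect square, which is assumed here. This handles both choices of $\lambda_c$ at once.

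For the second condition, $\lambda_c\neq\lambda_\infty$, I use Lemma~\ref{lem:gap}. It gives $\lambda^{(n)}_-<\lambda_\infty$ always, and $\lambda_\infty<\lambda^{(n)}_+$ except when $(k,n,\sigma)=(2,1,1)$. This exceptional case is the only place where care is needed, and it is excluded by the hypothesis. Indeed, by \eqref{eq:Q-cases}, $Q(2,1,1)=(2-1)^2+2\cdot1\cdot0=1=1^2$ is a perfect square. So whenever $Q(k,n,\sigma)$ is not a perfect square, $(k,n,\sigma)\neq(2,1,1)$ and both $\lambda^{(n)}_\pm$ differ from $\lambda_\infty$.

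Both $\lambda^{(n)}_-$ and $\lambda^{(n)}_+$ are therefore non-resonant critical values, and Theorem~\ref{thm:main} applies to each of them. The tail gap \eqref{eq:tail-gap} needed inside that theorem is already supplied by \eqref{eq:gapauto} of Lemma~\ref{lem:gap}, again because the exceptional triple is excluded. I do not expect a real obstacle. The one point that must not be skipped is the observation $Q(2,1,1)=1$, which ties the perfect-square hypothesis to the exception in Lemma~\ref{lem:gap}.
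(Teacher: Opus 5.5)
Your proposal is correct and follows essentially the same route as the paper: the first non-resonance condition comes directly from Lemma~\ref{lem:nonres}, and the second from Lemma~\ref{lem:gap} together with the observation that its only exceptional triple $(k,n,\sigma)=(2,1,1)$ has $Q(2,1,1)=1$, a perfect square, and is therefore excluded by hypothesis. Your added remark that the tail gap \eqref{eq:gapauto} is then automatically available matches how the paper feeds Lemma~\ref{lem:gap} into Theorem~\ref{thm:main}.
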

\begin{proof}
    The first condition in \eqref{eq:nonres} is Lemma~\ref{lem:nonres}. For the second, the only
    case with $\lambda_c=\lambda_\infty$ is $(k,n,\sigma)=(2,1,1)$ by Lemma~\ref{lem:gap}. There
    $Q(2,1,1)=(2+1-1)^{2}-1\cdot3=1=1^{2}$ is a perfect square, so that case is excluded by
    hypothesis.
\end{proof}

Theorem~\ref{thm:main} gives the existence and local uniqueness of the
bifurcating curve, but does not yet describe how the parameter
$\lambda$ varies along it.  Near $\alpha=0$, one has
\[
  \lambda(\alpha)
  =
  \lambda_c+\lambda'(0)\alpha+O(\alpha^2).
\]
Thus $\lambda'(0)$ measures the first order displacement of the
nontrivial branch from the critical value $\lambda_c$.  The next result shows that the linear correction vanishes, so the
first possible variation of $\lambda$ occurs at quadratic order.
Determining whether this quadratic correction is nonzero requires a
higher order analysis and is not addressed here.
\begin{prop}[Vanishing of the linear parameter correction]
\label{prop:lambda-prime}
Under the assumptions of Theorem~\ref{thm:main},
\begin{equation}\label{eq:lambda-prime}
    \lambda'(0)=0.
\end{equation}
\end{prop}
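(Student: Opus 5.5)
Differentiating the identity $\widetilde g(\alpha,\lambda(\alpha))=0$ at $\alpha=0$ gives
\[
  \lambda'(0)
  =
  -\frac{\partial_\alpha\widetilde g(0,\lambda_c)}{\partial_\lambda\widetilde g(0,\lambda_c)},
\]
and the denominator equals $\mu_1'(\lambda_c)\neq0$ by \eqref{eq:functional-transversality}. The task is therefore to show that $\partial_\alpha\widetilde g(0,\lambda_c)=\tfrac12\partial_\alpha^2 g(0,\lambda_c)=0$.

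We expand $g(\alpha,\lambda_c)=\langle\mathscr F(\alpha v^*+w(\alpha,\lambda_c),\lambda_c),v^*\rangle_{\mathbb R}$ to second order in $\alpha$. Write $\mathcal F(\Phi_0+v)=\mathcal F(\Phi_0)+L_cv+\tfrac12D^2\mathcal F(\Phi_0)[v,v]+O(v^3)$, use $w=O(\alpha^2)$, and note that $\langle L_cw,v^*\rangle_{\mathbb R}=\langle w,L_cv^*\rangle_{\mathbb R}=0$ by the symmetry of $L_c$. Then
\[
  \partial_\alpha^2g(0,\lambda_c)=\bigl\langle D^2\mathcal F(\Phi_0(\lambda_c))[v^*,v^*],v^*\bigr\rangle_{\mathbb R}.
\]
Since $\mathcal F$ is linear plus $-\mathcal C_\sigma=-\mathcal Q_\sigma(u,u,u)$, the second derivative at $\Phi_0=\sqrt{\lambda_c}e^{ikx}$ is a sum of trilinear terms in which one slot is filled by $\Phi_0$ and two slots by $v^*$.

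The key step is a Fourier support count. By Proposition~\ref{prop:regularity}, $\mathcal Q_\sigma(f,g,h)$ has output frequency $m=i-j+\ell$, where $i,\ell$ are frequencies of $f,h$ and $j$ is a frequency of $g$. With one input at frequency $k$ and two inputs in $\{n,p\}$, $p=2k-n$, the possible outputs are $a+b-k$ (carrier in the conjugated slot) or $k+a-b$ (carrier unconjugated, $v^*$ in the conjugated slot), with $a,b\in\{n,p\}$. The first set is $\{2n-k,\;k,\;3k-2n\}$ and the second is $\{k,\;3k-2n,\;2n-k\}$. To pair nontrivially with $v^*$, the output must lie in $\{n,2k-n\}$.

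So it suffices to check that none of $k$, $2n-k$, $3k-2n$ equals $n$ or $2k-n$:
\begin{itemize}
\item $k\neq n$ and $k\neq 2k-n$, because $n<k$.
\item $2n-k=n$ and $3k-2n=2k-n$ both force $n=k$; $2n-k=2k-n$ and $3k-2n=n$ both force $2n=3k$, i.e.\ $n>k$.
\end{itemize}
All cases are excluded. Hence $D^2\mathcal F(\Phi_0)[v^*,v^*]$ is $L^2$-orthogonal to $v^*$, so $\partial_\alpha\widetilde g(0,\lambda_c)=0$ and therefore $\lambda'(0)=0$.

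The main obstacle is the bookkeeping: the second-order term must be written correctly through the real-trilinear form, including the conjugated slot and the projection $\mathbb P_+$. The projection only deletes frequencies, so it does not affect the argument. Once the formula for $\partial_\alpha^2 g$ is justified, the frequency count settles the result.
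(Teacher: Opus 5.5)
Your argument is correct and is essentially the paper's: the paper differentiates $\mathscr F(v(\alpha),\lambda(\alpha))=0$ twice along the branch and pairs with $v^*$, which is the same computation as your implicit differentiation of $\widetilde g(\alpha,\lambda(\alpha))=0$, and it uses the same three ingredients — symmetry of $L_c$, the transversality $\langle L'(\lambda_c)v^*,v^*\rangle_{\mathbb R}\neq0$, and the support count $\{k-2d,k,k+2d\}\cap\{k-d,k+d\}=\emptyset$ with $d=k-n$. One arithmetic slip: $2n-k=2k-n$ and $3k-2n=n$ each reduce to $3n=3k$, i.e.\ $n=k$ (not $2n=3k$), which is still excluded by $n<k$, so your conclusion stands.
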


\begin{proof}
Let $d:=k-n>0$. Since $ n=k-d$ and $p=2k-n=k+d$
the kernel vector
$v^*=v_n^*e^{inx}+v_p^*e^{ipx}$ satisfies
\[
  \operatorname{supp}\widehat{v^*}
  \subseteq\{k-d,k+d\}.
\]
Every term in
\(D_\Phi^2\mathcal C_\sigma(\Phi_0(\lambda_c))[v^*,v^*]\)
contains one frequency $k$ and two frequencies in
\(\{k-d,k+d\}\).
Therefore,
\[
  \operatorname{supp}
  \widehat{
    D_\Phi^2\mathcal C_\sigma(\Phi_0(\lambda_c))[v^*,v^*]
  }
  \subseteq
  \{k-2d,k,k+2d\}\cap\mathbb N.
\]
These frequencies
are disjoint from \(\{k-d,k+d\}\), and hence
\begin{equation}\label{eq:quadratic-pairing}
    \left\langle
        D_\Phi^2\mathcal C_\sigma(\Phi_0(\lambda_c))[v^*,v^*],
        v^*
    \right\rangle_{\mathbb R}
    =0.
\end{equation}
Recalling \eqref{eq:branch}, set
\[
    v(\alpha)
    :=
    \Phi(\alpha)-\Phi_0(\lambda(\alpha))
    =\alpha v^*+w(\alpha).
\]
Differentiating twice at $\alpha=0$ the equation $\mathscr F\bigl(v(\alpha),\lambda(\alpha)\bigr)=0$,
where $\mathscr F$ is the shifted map defined in
\eqref{eq:shifted-map}, and pairing with $v^*$
gives
\[
    0
    =
    \left\langle L_c v''(0),v^*\right\rangle_{\mathbb R}
    +2\lambda'(0)
    \left\langle L'(\lambda_c)v^*,v^*\right\rangle_{\mathbb R}
    -
    \left\langle
        D_\Phi^2\mathcal C_\sigma(\Phi_0(\lambda_c))[v^*,v^*],
        v^*
    \right\rangle_{\mathbb R}.
\]
Here the other derivatives vanish because
\(\mathscr F(0,\lambda)=0\).  The first term also vanishes, since
\(L_c\) is symmetric and \(L_cv^*=0\), while the third vanishes by
\eqref{eq:quadratic-pairing}. This yields
\[
    2\lambda'(0)
    \left\langle L'(\lambda_c)v^*,v^*\right\rangle_{\mathbb R}
    =0.
\]
The pairing is nonzero by
\eqref{eq:functional-transversality}.  Thus \(\lambda'(0)=0\).
\end{proof}

\section{Tail bifurcations}
\label{sec:additional}
Recall from~\eqref{eq:lambda-tail} and~\eqref{eq:tail-accumulation} the tail thresholds
$\lambda^{\mathrm{tail}}_r$ and their limit $\lambda_\infty$. At $\lambda_\infty$ the tail gap
closes and the bifurcation theorem does not apply.

\begin{prop}[Tail bifurcations]\label{prop:tail-bif}
Let \(s>3/2\), assume \(\omega<0\) and \(k>\sigma\), and fix an
integer $r\geq2k$.
Then
\[
    \mathcal D_r(\lambda_r^{\mathrm{tail}})=0,
    \qquad
    \partial_\lambda\mathcal D_r
    =-(k-\sigma)\neq0.
\]
Moreover,
\((\Phi_0(\lambda_r^{\mathrm{tail}}),
\lambda_r^{\mathrm{tail}})\)
is a local bifurcation point in $X^s$, with kernel generated by
$e^{irx}$.  With the kernel coefficient as branch parameter,
\[
    \lambda(\alpha)
    =
    \lambda_r^{\mathrm{tail}}+O(\alpha^2).
\]
The corresponding speed is
\[
    c(\alpha)
    =
    \frac{\omega}{k}-\lambda(\alpha)(k-\sigma).
\]
\end{prop}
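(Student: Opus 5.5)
The plan is to run the same Lyapunov--Schmidt scheme as in Theorem~\ref{thm:main}, with the tail mode $e^{irx}$ in place of the two-mode kernel vector.

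\textbf{Threshold and kernel.} The first claims are immediate. From $\mathcal D_r(\lambda)=\Omega_r-\lambda(k-\sigma)$ and \eqref{eq:lambda-tail} we get $\mathcal D_r(\lambda_r^{\mathrm{tail}})=0$ and $\partial_\lambda\mathcal D_r=-(k-\sigma)\neq0$. To place ourselves in a setting analogous to Proposition~\ref{prop:fredholm}, set $\lambda_c:=\lambda_r^{\mathrm{tail}}$ and check that $\ker L(\lambda_c)=\operatorname{span}_{\mathbb R}\{e^{irx}\}$. This needs four facts.
\begin{enumerate}
\item[(i)] The carrier block is $-2\lambda_c(k-\sigma)\neq0$.
\item[(ii)] The other tail symbols do not vanish. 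The map $r'\mapsto\lambda^{\mathrm{tail}}_{r'}$ is strictly increasing, so $\mathcal D_{r'}(\lambda_c)\neq0$ for $r'\neq r$.
\item[(iii)] The tail has a uniform gap. Since $\lambda_c<\lambda_\infty$ by \eqref{eq:ordering}, we have $\mathcal D_\infty(\lambda_c)\neq0$, and by the argument at the end of Lemma~\ref{lem:gap} the infimum over $r'\neq r$ is positive.
\item[(iv)] Every $2\times2$ block is invertible. By \eqref{eq:ordering}, $\lambda^{(q)}_-<\lambda^{\mathrm{tail}}_{2k}\le\lambda_c<\lambda_\infty\le\lambda^{(q)}_+$, so $\lambda_c$ lies strictly inside $(\lambda^{(q)}_-,\lambda^{(q)}_+)$ and $\det M^{(q)}(\lambda_c)=-(k-q)^2P_q(\lambda_c)>0$.
\end{enumerate}
Proposition~\ref{prop:fredholm} is stated for a selected $2\times2$ block, but its proof uses only symmetry and block-wise bounded invertibility away from a one-dimensional kernel. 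I would repeat it verbatim with the selected block replaced by the scalar mode $r$. This gives $\operatorname{Ran}L_c=(e^{irx})^{\perp}\cap X^s$ and a bounded inverse on that complement.

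\textbf{Reduction.} Next, follow the proof of Theorem~\ref{thm:main} with $v^*=e^{irx}$, which is already normalized for \eqref{eq:real-pairing}.
\begin{itemize}
\item Solve the auxiliary equation by the analytic implicit function theorem. This gives $w(\alpha,\lambda)$ with $w(0,\lambda)=0$ and $\partial_\alpha w(0,\lambda_c)=0$.
\item Write the bifurcation equation as $g=\alpha\widetilde g$.
\item The transversality quantity is $\langle L'(\lambda_c)e^{irx},e^{irx}\rangle_{\mathbb R}=\partial_\lambda\mathcal D_r=-(k-\sigma)\neq0$. Here no eigenvector perturbation is needed, since the tail is diagonal.
\item The implicit function theorem then gives $\lambda(\alpha)$, together with local uniqueness.
\end{itemize}
The speed formula follows from the definition of $c(\lambda)$.

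\textbf{Vanishing of $\lambda'(0)$.} For $\lambda(\alpha)=\lambda_c+O(\alpha^2)$, repeat Proposition~\ref{prop:lambda-prime} with $d:=r-k\ge k$. The quadratic term $D^2_\Phi\mathcal C_\sigma(\Phi_0)[e^{irx},e^{irx}]$ has frequencies built from one $\pm k$ (or the conjugate of $\Phi_0$) and two copies of $\pm r$, compatible with the structure $i-j+\ell=m$. These are $2r-k$, $k$, and $2k-r$. After $\mathbb P_+$, and since $2k-r\le0$, they lie in $\{k,2r-k\}$. Both differ from $r$ because $r\neq k$. Hence the pairing with $e^{irx}$ vanishes, and $2\lambda'(0)\cdot(-(k-\sigma))=0$.

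\textbf{Main obstacle.} The delicate step is the Fredholm/gap verification, specifically excluding a simultaneous vanishing of some finite block at $\lambda_r^{\mathrm{tail}}$. This is exactly where the strict interlacing \eqref{eq:ordering} of Lemma~\ref{lem:gap} is essential. In the exceptional case $(k,n,\sigma)=(2,1,1)$ the interlacing is only non-strict at $\lambda_\infty$, so I would confirm that it remains strict at every finite $\lambda_r^{\mathrm{tail}}<\lambda_\infty$.
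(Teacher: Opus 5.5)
Your proposal is correct and follows the paper's proof step for step. Both use the tail gap away from mode $r$ plus the nonzero carrier coefficient, the interlacing of Lemma~\ref{lem:gap} to make every $2\times2$ block invertible without a non-resonance hypothesis, Lyapunov--Schmidt reduction with $v^*=e^{irx}$ and transversality $\partial_\lambda\mathcal D_r=-(k-\sigma)$, and $\lambda'(0)=0$ because the quadratic term is supported on $\{k,2r-k\}$; a cosmetic point is that the frequency $2k-r$ never actually arises in $D^2_\Phi\mathcal C_\sigma(\Phi_0)[e^{irx},e^{irx}]$ (it would need two copies of $\Phi_0$), which is harmless since you discard it via $\mathbb P_+$ anyway.
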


\begin{proof}
Fix $r\ge2k$ and write $\lambda_r:=\lambda^{\mathrm{tail}}_r$.
 
\emph{Gap on the complement of the mode $r$.} By \eqref{eq:tail} and \eqref{eq:lambda-tail},
$\mathcal D_j(\lambda_r)=\omega\bigl(\tfrac1j-\tfrac1r\bigr)\neq0$ for every $2k\leq j\neq r$,
and $\mathcal D_\infty(\lambda_r)=-\omega/r\neq0$ by \eqref{eq:tail-limit}. Since
$\mathcal D_j(\lambda_r)\to\mathcal D_\infty(\lambda_r)$, we have
\[ \inf_{\substack{j\ge2k\\j\neq r}} |\mathcal D_j(\lambda_r)| = |\omega| \inf_{\substack{j\ge2k\\j\neq r}} \left|\frac1j-\frac1r\right| = \frac{|\omega|}{r(r+1)}>0. \]
Moreover, the coefficient
\eqref{eq:carrier-symbol} for the mode $k$ is $-2\lambda_r(k-\sigma)\neq0$ by definition.
 
\emph{The $2\times2$ blocks are non-singular.} Applying Lemma~\ref{lem:gap} with $n=q$
gives, for every $1\le q<k$,
\[
\lambda^{(q)}_-<\lambda^{\mathrm{tail}}_{2k}\le\lambda_r<\lambda_\infty\le\lambda^{(q)}_+ ,
\]
so $\lambda_r$ lies between the two roots of $P_q$. Hence $P_q(\lambda_r)<0$ by
\eqref{eq:signrule}, and $\det M^{(q)}(\lambda_r)=-(k-q)^2P_q(\lambda_r)\neq0$ by
\eqref{eq:det-factor}. No non-resonance hypothesis is needed.
 
\emph{Kernel and Fredholm property.} Combining the previous paragraphs, every block of the
decomposition \eqref{eq:direct-sum} is invertible except the scalar at the index $r$,
whose symbol vanishes (the inverses are uniformly bounded). Therefore
$L(\lambda_r):X^s\to X^s$ is Fredholm of index zero with
$\ker L(\lambda_r)=\mathrm{span}_{\mathbb R}\{e^{irx}\}$, and its restriction to
$(\ker L(\lambda_r))^{\perp_{L^2}}\cap X^s$ is invertible with bounded inverse.
 
\emph{Transversality.} By \eqref{eq:tail}, $\partial_\lambda\mathcal D_r(\lambda)=-(k-\sigma)\neq0$,
so the argument of
\eqref{eq:functional-transversality} applies with $v^*=e^{irx}$.

\emph{Conclusion.} Lyapunov-Schmidt reduction as in the proof of Theorem~\ref{thm:main} yields a
real-analytic branch. 
The argument of Proposition~\ref{prop:lambda-prime} applies with
$v^*=e^{irx}$. Indeed, the quadratic term
\[
  D_\Phi^2\mathcal C_\sigma(\Phi_0(\lambda_r))
  [e^{irx},e^{irx}]
\]
is supported on the frequencies $\{k,2r-k\}$, which are disjoint from
$\{r\}$ because $r\geq2k>k$. Hence its pairing with $e^{irx}$
vanishes, and therefore $\lambda'(0)=0$.
\end{proof}

\begin{cor}\label{cor:k1}
Assume $\omega<0$. For $k=1$ and $\sigma=0$ there are no $2\times2$ blocks, and the coefficient for the mode $k$
\eqref{eq:carrier-symbol} equals $-2\lambda\neq0$. Hence the values
$\lambda^{\mathrm{tail}}_r=-\omega\,(r-1)/r$, $r\ge2$, are the values at which
$L(\lambda)$ has a non-trivial kernel, and each of them is a bifurcation value by
Proposition~\ref{prop:tail-bif}. Thus the $k=1$ branch has a bifurcation at
every \(\lambda_r^{\mathrm{tail}}\).
They accumulate at $\lambda_\infty=-\omega$, where $L(\lambda_\infty)$ has trivial kernel but is
not Fredholm, by Remark~\ref{rmk:pointwise-gap}.
\end{cor}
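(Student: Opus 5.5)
The plan is to assemble the corollary from the block decomposition \eqref{eq:block-matrix} and from Proposition~\ref{prop:tail-bif}. I will treat the kernel characterization and the non-Fredholm claim at $\lambda_\infty$ as two separate verifications.

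\emph{Step 1: structure for $k=1$, $\sigma=0$.} The index set $1\le q<k$ is empty, so \eqref{eq:direct-sum} reduces to the carrier mode $e^{ix}$ together with the tail $r\ge 2$. Hence $L(\lambda)$ is the diagonal Fourier multiplier with symbol $-2\lambda$ at $r=1$, by \eqref{eq:carrier-symbol} with $k-\sigma=1$, and with symbol
\[
\mathcal D_r(\lambda)=\omega\Bigl(\frac1r-1\Bigr)-\lambda
\]
for $r\ge2$. Since $\lambda>0$, the carrier symbol never vanishes. For a diagonal multiplier on $X^s$, a nonzero $h$ lies in the kernel exactly when every nonzero coefficient $h_r$ sits at an index where the symbol vanishes. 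So $\ker L(\lambda)\neq\{0\}$ if and only if $\mathcal D_r(\lambda)=0$ for some $r\ge2$. Solving this gives
\[
\lambda=\lambda^{\mathrm{tail}}_r=-\omega\,\frac{r-1}{r},
\]
which is \eqref{eq:lambda-tail} with $k=1$, $\sigma=0$. These values are pairwise distinct, so at each of them the kernel is exactly $\operatorname{span}_{\mathbb R}\{e^{irx}\}$.

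\emph{Step 2: bifurcation at each threshold.} Proposition~\ref{prop:tail-bif} applies directly, since $\omega<0$ and $k=1>0=\sigma$. Its proof already handles the case where the family of $2\times2$ blocks is empty: the step invoking Lemma~\ref{lem:gap} for each $q$ becomes vacuous. So every $\lambda^{\mathrm{tail}}_r$ is a local bifurcation point carrying a real-analytic branch.

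\emph{Step 3: the accumulation point.} From \eqref{eq:tail-accumulation}, $\lambda^{\mathrm{tail}}_r\uparrow\lambda_\infty=-\omega/(k(k-\sigma))=-\omega$. At this value:
\begin{itemize}
\item the carrier symbol is $2\omega\neq0$;
\item the tail symbols are $\mathcal D_r(\lambda_\infty)=\omega/r\neq0$ for every $r$, by \eqref{eq:tail-limit}.
\end{itemize}
Hence the kernel is trivial. Remark~\ref{rmk:pointwise-gap} gives that the range is not closed, so $L(\lambda_\infty)$ is not Fredholm.

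The only point I would check explicitly is that the range is genuinely not closed, rather than relying on the remark alone. Take $y=\sum_r r^{-s}r^{-1/2-\epsilon}e^{irx}$ with small $\epsilon>0$; its coefficients are real, so $y\in X^s$. Its only possible preimage is $x_r=y_r r/\omega$, and $\sum r^{2s}x_r^2$ diverges when $\epsilon<1/2$, so $x\notin X^s$. On the other hand, truncations of $y$ lie in the range and converge to $y$, which shows the range is dense but not closed. This is the main, though mild, obstacle; everything else is bookkeeping from earlier results.
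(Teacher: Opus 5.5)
Your proposal is correct and follows the paper's proof essentially line by line. Like the paper, you use the purely diagonal structure for $k=1$, $\sigma=0$, the tail symbol $\mathcal D_r(\lambda)=\omega(1/r-1)-\lambda$ and its zeros $\lambda^{\mathrm{tail}}_r$, the nonvanishing carrier symbol $-2\lambda$, an appeal to Proposition~\ref{prop:tail-bif}, and the value $\mathcal D_r(\lambda_\infty)=\omega/r$ at the accumulation point. Your explicit example $y_r=r^{-s-1/2-\epsilon}$, which shows the range of $L(\lambda_\infty)$ is dense but not closed, is a welcome addition: the paper only asserts this in Remark~\ref{rmk:pointwise-gap} without verifying it.
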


\begin{proof}
Notice that 
\[ \mathcal D_j(\lambda) = \omega\left(\frac1j-1\right)-\lambda, \]
hence, for \(\lambda>0\),
\[ \mathcal D_j(\lambda)=0 \quad\Longleftrightarrow\quad \lambda=-\omega\frac{j-1}{j}. \]
There are no finite $2\times2$ blocks and the symbol of mode $k$ is
\(-2\lambda\neq0\). At \(\lambda_\infty=-\omega\),
\(\mathcal D_j(\lambda_\infty)=\omega/j\), so the kernel is trivial but the operator is not Fredholm.
\end{proof}

\begin{rmk}[Bifurcation at the accumulation point]\label{rmk:accumulation}
Let $\omega<0$ and $k>\sigma$. 
Although no local branch is constructed directly from
$\lambda_\infty$, the accumulation of the tail bifurcation points makes
$(\Phi_0(\lambda_\infty),\lambda_\infty)$ a bifurcation point in a topological sense. Every neighborhood of this point contains
a non-trivial solution of $\mathcal F=0$.
Indeed, given $\varepsilon>0$, choose $r\ge2k$ so large that
\[
|\lambda_r^{\mathrm{tail}}-\lambda_\infty|
+
\|\Phi_0(\lambda_r^{\mathrm{tail}})
-\Phi_0(\lambda_\infty)\|_{X^s}
<\frac{\varepsilon}{2}.
\]
This is possible by \eqref{eq:tail-accumulation} and because
$\Phi_0(\lambda)=\sqrt{\lambda}\,e^{ikx}$.
Then choose a non-trivial point $(\Phi,\lambda)$ of the branch
of Proposition~\ref{prop:tail-bif} issuing from
$\lambda_r^{\mathrm{tail}}$ so close to its base point that
\[
|\lambda-\lambda_r^{\mathrm{tail}}|
+
\|\Phi-\Phi_0(\lambda_r^{\mathrm{tail}})\|_{X^s}
<\frac{\varepsilon}{2}.
\]
The triangle inequality gives the claim.
\end{rmk}

\begin{rmk}[The case $\sigma=1$, $n=1$]
\label{rmk:spherical-n1}
Assume $\omega<0$ and $k\ge2$.  Since the off-diagonal coefficient
$-\lambda(n-\sigma)$ vanishes when $n=\sigma=1$, one has
\[
  M^{(1)}(\lambda)
  =
  \operatorname{diag}\left(
    (k-1)\left(\frac{\omega}{k}+\lambda\right),
    (k-1)\left(-\frac{\omega}{k(2k-1)}-\lambda\right)
  \right).
\]
Consequently,
\[
  \lambda_-^{(1)}
  =
  -\frac{\omega}{k(2k-1)},
  \qquad
  \lambda_+^{(1)}
  =
  -\frac{\omega}{k},
\]
and the corresponding kernels are generated by
$e^{i(2k-1)x}$ and $e^{ix}$, respectively.

The upper critical value belongs to an explicit two-mode family.
Indeed, for every $a\in\mathbb R$,
\[
  \Phi_a(x)
  =
  a e^{ix}
  +
  \sqrt{-\frac{\omega}{k}}\,e^{ikx},
  \qquad
  \lambda(a)=-\frac{\omega}{k},
  \qquad
  c(a)=\omega,
\]
solves the traveling-wave equation.

Since
\[
  Q(k,1,1)=(k-1)^2
\]
is a perfect square, Corollary~\ref{cor:square} does not apply and
non-resonance must be checked directly. Writing
$\lambda=-\omega\mu$ as in Lemma~\ref{lem:nonres}, one obtains
\[
  \widetilde P_q\!\left(\frac1k\right)
  =
  \frac{-(q-1)(q-2k+3)}
       {k^2q(2k-q)},
\]
and
\[
  \widetilde P_q\!\left(\frac{1}{k(2k-1)}\right)
  =
  -\frac{(q-1)(q+2k-1)}
        {k^2q(2k-q)(2k-1)^2}.
\]
For $k\ge3$ and $1\le q<k$, each expression vanishes only at
$q=1$. The additional zero $q=2k-3$ of the first expression lies
outside the admissible range, while the additional zero $q=1-2k$ of
the second is negative. Hence
\[
  \det M^{(q)}(\lambda_\pm^{(1)})\neq0
  \qquad
  \text{for every }1\le q<k,\quad q\neq1.
\]
Moreover, Lemma~\ref{lem:gap} gives
\[
  \lambda_\pm^{(1)}\neq\lambda_\infty
  \qquad\text{for }k\ge3.
\]
Thus both critical values satisfy Definition~\ref{def:nonres} when
$k\ge3$.

At $\lambda_+^{(1)}$, Theorem~\ref{thm:main} identifies the local
bifurcating curve with the explicit family above. With the
normalization $v^*=e^{ix}$ and the kernel coordinate $\alpha=a$,
\[
  w(\alpha)=0,
  \qquad
  \lambda(\alpha)=-\frac{\omega}{k}.
\]
The branch is therefore a vertical branch, rather than a
non-degenerate pitchfork. The additional conclusion supplied by the
theorem is its local classification within $X^s$, not the existence
of the family itself. At $\lambda_-^{(1)}$, the theorem yields a local
real-analytic branch tangent to $e^{i(2k-1)x}$.

The case $k=2$ is exceptional. In this case,
\[
  \lambda_+^{(1)}
  =
  -\frac{\omega}{2}
  =
  \lambda_\infty,
  \qquad
  \lambda_-^{(1)}
  =
  -\frac{\omega}{6}.
\]
At $\lambda_+^{(1)}$, the tail symbols are
\[
  \mathcal D_j(\lambda_+^{(1)})
  =
  \frac{\omega}{j},
  \qquad j\ge4.
\]
They are non-zero for every finite $j$ but tend to zero as
$j\to\infty$. Thus the tail gap closes, the linearization is not
Fredholm, and Theorem~\ref{thm:main} does not apply.

Nevertheless, a vertical branch can be verified directly. Set
\[
  B:=\sqrt{-\frac{\omega}{2}}
\]
and, for every $a\in\mathbb R$, define
\[
  \Phi_a(x):=a e^{ix}+B e^{2ix}.
\]
Since $\lambda_+^{(1)}=-\omega/2$, the traveling-wave map becomes
\[
  \mathcal F(\Phi,\lambda_+^{(1)})
  =
  -\omega\Phi+\omega\Lambda^{-1}\Phi-\mathcal C_1[\Phi].
\]
Using
\[
  \Lambda^{-1}e^{ix}=e^{ix},
  \qquad
  \Lambda^{-1}e^{2ix}=\frac12e^{2ix},
\]
we obtain
\[
  -\omega\Phi_a+\omega\Lambda^{-1}\Phi_a
  =
  -\frac{\omega}{2}B e^{2ix}.
\]
On the other hand, the identity
\[
  \mathcal C_1[a e^{ix}+B e^{2ix}]
  =
  |B|^2B e^{2ix}
\]
and the relation $|B|^2=-\omega/2$ give
\[
  \mathcal C_1[\Phi_a]
  =
  -\frac{\omega}{2}B e^{2ix}.
\]
Therefore
\[
  \mathcal F(\Phi_a,\lambda_+^{(1)})=0.
\]
Hence
\[
  \Phi_a(x)
  =
  a e^{ix}
  +
  \sqrt{-\frac{\omega}{2}}\,e^{2ix},
  \qquad
  \lambda(a)=-\frac{\omega}{2},
  \qquad
  c(a)=\omega,
\]
is a vertical branch. Its existence follows directly from the
equation, independently of Theorem~\ref{thm:main}. 

On the other hand, $\lambda_-^{(1)}$ is non-resonant. Since $1\le q<2$, there are no
other finite $2\times2$ blocks and
$\lambda_-^{(1)}\neq\lambda_\infty$. Hence
Theorem~\ref{thm:main} gives a local real-analytic branch tangent to
$e^{3ix}$. This is the unique case in which one of the two roots
coincides with the non-Fredholm tail accumulation point while the
other remains a simple Fredholm bifurcation value.
\end{rmk}

\vspace{5mm}
\textbf{Acknowledgment.}
The author is grateful to Adrian Constantin and Pierre Germain for many helpful discussions during the preparation of this paper.

\vspace{1mm}
\textbf{Conflict of interest.} The author declares that he has no conflict of interest.

\vspace{1mm}
\textbf{Data availability.} Data sharing is not applicable.
We do not analyze or generate any datasets, because our work proceeds within a theoretical approach.

\bibliographystyle{plain}
\bibliography{bib}

\end{document}